\newcommand{\abs}[1]{\left\lvert #1 \right\rvert}
\newcommand{\zmod}[1]{\mathbb{Z}/ #1 \mathbb{Z}}
\newcommand{\dangle}[1]{\left\langle #1 \right\rangle}
\newcommand{\bmu}{\mathbf{\mu}}
\def\nw{\mathrm{new}}
\DeclareMathSymbol{\twoheadrightarrow} {\mathrel}{AMSa}{"10}
\DeclareMathOperator{\Jac}{Jac}
\DeclareMathOperator{\Char}{{Char\,}}
\DeclareMathOperator{\rk}{rank}
\DeclareMathOperator{\supp}{supp}
\DeclareMathOperator{\Div}{Div}
\DeclareMathOperator{\Disc}{Disc}
\DeclareMathOperator{\Pic}{Pic}
\DeclareMathOperator{\Res}{Res}
\DeclareMathOperator{\Aut}{Aut}
\DeclareMathOperator{\End}{End}
\DeclareMathOperator{\Hom}{Hom}
\DeclareMathOperator{\Id}{Id}
\DeclareMathOperator{\Gal}{Gal}
\DeclareMathOperator{\Mat}{Mat}
\DeclareMathOperator{\Nm}{N}  
\DeclareMathOperator{\Lie}{Lie}
\def\a{{\mathfrak a}} 
\newcommand{\m}{\mathfrak{m}}
\newcommand{\FV}{\mathfrak{F}} 
\newcommand{\GV}{\mathfrak{G}}
\newcommand{\RV}{\mathfrak{R}}
\newcommand{\LZ}{\mathscr{L}}
\newcommand{\cc}{\mathbb{C}}
\newcommand{\ff}{\mathbb{F}}
\newcommand{\nn}{\mathbb{N}}
\newcommand{\pp}{\mathbb{P}}
\newcommand{\qq}{\mathbb{Q}}
\newcommand{\zz}{\mathbb{Z}}
\def\bmu{\boldsymbol \mu}
\newcommand{\DD}{\mathcal{D}}
\newcommand{\EE}{\mathcal{E}}
\newcommand{\MM}{\mathcal{M}}
\newcommand{\OO}{\mathcal{O}}
\newcommand{\PP}{\mathcal{P}}
\newcommand{\QQ}{\mathcal{Q}}
\def\Sn{{\mathbf S}_n}
\def\An{{\mathbf A}_n}
\newtheorem{thm}{Theorem}[section]
\newtheorem{lem}[thm]{Lemma}
\newtheorem{cor}[thm]{Corollary}
\newtheorem{prop}[thm]{Proposition}
\theoremstyle{definition}
\newtheorem{rem}[thm]{Remark}
\newtheorem{sect}[thm]{}
\numberwithin{equation}{section}
\newtheoremstyle{notitle}  
  {}
  {}
  {\itshape}
  {}
  {}
  {\ }
  {.5em}
  {}
\theoremstyle{notitle}
\title[Fixed points and
  homology of superelliptic Jacobians]{Fixed points and
  homology of superelliptic Jacobians}
\author{Haining Wang, Jiangwei Xue, Chia-Fu Yu}
\address{(Wang) Department of Mathematics, Pennsylvania State
  University, University Part, PA 16802, USA} 
\email{wang\_h\char`\@math.psu.edu}
\address{(Xue) Institute of Mathematics, Academia Sinica, 6F,
  Astronomy-Mathematics Building, No. 1, Sec. 4, Roosevelt Road,
  Taipei 10617, TAIWAN.} 
\email{xue\_j\char`\@math.sinica.edu.tw}
\address{
(Yu) Institute of Mathematics, Academia Sinica and NCTS (Taipei Office)\\
Astronomy-Mathematics Building \\
No. 1, Roosevelt Rd. Sec. 4 \\
Taipei, 10617, Taiwan}
\email{chiafu@math.sinica.edu.tw}
\begin{document}
\date{\today} \subjclass[2010]{14H40, 14F25} \keywords{superelliptic
  curves, invariant subgroup, Tate modules, Jacobians}

\begin{abstract}
  Let $\eta: C_{f,N}\to \pp^1$ be a cyclic cover of $\pp^1$ of degree
  $N$ which is totally and tamely ramified for all the ramification
  points.  We determine the group of fixed points of the cyclic group
  $\bmu_N\cong \zmod{N}$ acting on the Jacobian
  $J_N:=\Jac(C_{f,N})$. For each prime $\ell$ distinct from the
  characteristic of the base field, the Tate module $T_\ell J_N$ is
  shown to be a free module over the ring
  $\zz_\ell[T]/(\sum_{i=0}^{N-1}T^i)$. We also calculate the degree of
  the induced polarization on the new part $J_N^\nw$ of the Jacobian.
\end{abstract}

\maketitle
\section{Introduction}\label{sec:introduction}
Through out this paper, $K$ is an algebraically closed field except
when specified otherwise. The characteristic of $K$ is denoted by
$\Char(K)$. If $A$ is an abelian variety over $K$, we write $A^\vee$
for the dual abelian variety of $A$, and $\End(A)$ for the
endomorphism ring of $A$. The endomorphism algebra
$\End^0(A):=\End(A)\otimes_\zz \qq$ is a finite-dimensional semisimple
algebra over $\qq$.  Given an abelian group $G$ (or a commutative
group scheme $G$ over $K$), $G[m]$ denotes the kernel of the
homomorphism $G\xrightarrow{m}G$. We often identify a finite \'etale
group scheme $G/K$ with $G(K)$. The cardinality of a finite set $S$ is
denoted by $\abs{S}$. In particular, for any prime $\ell\neq
\Char(K)$, one has $\abs{A[\ell]}=\ell^{2\dim A}$.  The letters $p$
and $\ell$ always denote primes in $\nn$.

Fix an integer $N>1$ coprime to $\Char(K)$. Then $T^N-1\in K[T]$ is
separable over $K$.  Let $\xi_N\in K$ be a primitive $N$-th root of
unity in $K$, and $\bmu_N:=\dangle{\xi_N}$, the group of $N$-th root
of unity in $K$. Suppose that $f(X)=\prod_{i=1}^n
(X-\alpha_i)^{e_i}\in K[X]$ is a monic polynomial with
\begin{equation}
  \label{eq:assumption-on-f}
\gcd(\deg(f), N)=1, \qquad \gcd(e_i,N)=1, \; \forall\, 1\leq i\leq n.
\end{equation}
For example, if $N$ is even, then all $e_i$ must be odd, and hence
$n$ must be odd as well to ensure that $\gcd(\deg(f), N)=1$.

Let $C_{f,N}$ be the smooth projective curve defined by the affine
equation $Y^N=f(X)$, and $J_N:=\Jac(C_{f,N})$ be the Jacobian variety
of $C_{f,N}$.  The map \[\eta: C_{f,N}\to \pp^1, \qquad (X,Y)\mapsto
X\] realizes $C_{f,N}$ as a cyclic cover of degree $N$ of the
projective line $\pp^1$. There is a canonical isomorphism $\rho_C:
\bmu_N\xrightarrow{\cong} \Aut(\eta)\subseteq \Aut_K(C_{f,N})$, given
by
\[ \rho_C(\xi): C_{f,N}\to C_{f,N}, \qquad (X, Y)\mapsto (X, \xi Y),
\quad \forall\, \xi\in \bmu_N.\] We denote $\rho_C(\xi_N)$ by
$\delta_N$.

The Albanese functoriality induces an action $\rho_J: \bmu_N\to
\Aut(J_N)$ of $\bmu_N$ on the Jacobian $J_N$. By an abuse of notation,
we still write $\delta_N$ for the map $\rho_J(\xi_N): J_N\to J_N$
induced from $\delta_N: C_{f,N}\to C_{f,N}$. For each $D\in \nn$, let
$\Phi_D(T)\in \zz[T]$ be the $D$-th cyclotomic polynomial, which is a
monic irreducible polynomial of degree $\varphi(D)$. It will be shown
in Subsection \ref{subsec:characteristic-poly-of-deltaN} (cf. also
\cite[Lemma 4.8]{MR2166091} in the case $N=p^r$ is a prime power and
$\Char(K)=0$) that the minimal polynomial over $\zz$ of
$\delta_N\in \End(J_N)$ is
\begin{equation}
  \label{eq:def-of-PN}
  P_N(T):=\frac{T^N-1}{T-1} = \prod_{D\mid N, D> 1}
  \Phi_D(T)=\sum_{i=0}^{N-1}T^i.  
\end{equation}
So there is an embedding 
\begin{equation}
  \label{eq:embedding-into-end-of-JN}
  \iota
  :\zz[T]/(P_N(T))\hookrightarrow \End(J_N), \qquad T\mapsto \delta_N .
\end{equation}
Hence for each prime $\ell\neq \Char(K)$, the Tate-module $T_\ell
J_N:=\varprojlim_{i\geq 1} J_N[\ell^i]$ is naturally a
$\zz_\ell[T]/(P_N(T))$-module.
\begin{thm}[Main Theorem]\label{thm:main}
  For any prime $\ell\neq \Char(K)$, $T_\ell J_N$ is a free
  $\zz_\ell[T]/(P_N(T))$-module of rank $n-1$.  In particular, if
  $K=\cc$, the first homology group $H_1(C_{f,N}(\cc), \zz)$ is a
  projective $\zz[T]/(P_N(T))$-module of rank $n-1$.
\end{thm}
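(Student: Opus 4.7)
The plan is to compute the $\zz_\ell$-rank of $T_\ell J_N$ via Riemann--Hurwitz, establish freeness over $R\otimes_{\zz_\ell}\qq_\ell$ via the Lefschetz trace formula, and then upgrade from rational to integral freeness using the semi-locality of $R := \zz_\ell[T]/(P_N(T))$.

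I would first apply Riemann--Hurwitz to $\eta: C_{f,N}\to \pp^1$. The hypotheses $\gcd(\deg f, N) = 1$ and $\gcd(e_i, N) = 1$ ensure that the cover is totally ramified with ramification index $N$ at exactly the $n+1$ points above $\alpha_1,\dots,\alpha_n,\infty$; tameness follows from $\gcd(N,\Char K)=1$. This yields $g_{C_{f,N}} = (n-1)(N-1)/2$, so $T_\ell J_N$ is $\zz_\ell$-free of rank $(n-1)(N-1) = (n-1)\deg P_N$, matching the expected rank of a free $R$-module of rank $n-1$.

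Next, for each $1\leq k\leq N-1$, the automorphism $\delta_N^k\in \Aut(C_{f,N})$ has the same fixed locus as $\delta_N$ itself---the $n+1$ ramification points---since $\eta$ is \'etale Galois on the complement. Tameness forces the tangent action at each fixed point to be a nontrivial $N$-th root of unity, so the fixed points are non-degenerate, and the Lefschetz trace formula on $C_{f,N}$ yields $\Tr(\delta_N^k\mid H^1(C_{f,N},\qq_\ell))=2-(n+1)=1-n$. Transferring to $V_\ell J_N:=T_\ell J_N\otimes \qq_\ell$ via duality (which affects neither multiplicities nor freeness), character orthogonality over $\bar\qq_\ell$ forces each nontrivial character of $\bmu_N$ to occur in $V_\ell J_N$ with multiplicity $n-1$ and the trivial character with multiplicity $0$; this agrees with the $\bmu_N$-character of $(R\otimes \qq_\ell)^{n-1}$, so $V_\ell J_N$ is free of rank $n-1$ over $R\otimes\qq_\ell$.

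The crux is the integral upgrade. Since $R$ is a finite $\zz_\ell$-algebra, it is Noetherian and semi-local, and every finitely generated projective $R$-module of constant rank is free; it therefore suffices to show $T_\ell J_N$ is $R$-projective. For $\ell\nmid N$, the ring $R$ splits as $\prod_{D\mid N,\, D>1}\zz_\ell[T]/(\Phi_D(T))$, a finite product of Dedekind rings (indeed of products of DVRs, since each $\Phi_D$ factors into distinct irreducibles of unramified extensions of $\qq_\ell$), so the $\zz_\ell$-torsion-freeness of $T_\ell J_N$ automatically yields projectivity. For $\ell\mid N$, however, $R$ is no longer reduced modulo $\ell$; this case is the principal obstacle. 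Here I would leverage the paper's explicit description of the fixed subgroup $J_N^{\bmu_N}$ to pin down the $R/\ell R$-structure of $J_N[\ell]=T_\ell J_N/\ell T_\ell J_N$, showing that this finite module over the Artinian ring $R/\ell R$ is free of rank $n-1$, and then lift an $R/\ell R$-basis to a free $R$-basis of $T_\ell J_N$ via Nakayama.

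Finally, for $K=\cc$: the comparison $H_1(C_{f,N}(\cc),\zz)\otimes\zz_\ell\cong T_\ell J_N$, together with the $\zz_\ell$-freeness for every $\ell$ and the rational freeness above, implies that $H_1(C_{f,N}(\cc),\zz)$ is locally free of rank $n-1$ at every prime of the Noetherian ring $\zz[T]/(P_N(T))$, hence projective of rank $n-1$.
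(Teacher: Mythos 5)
Your skeleton (genus via Riemann--Hurwitz, the $\bmu_N$-character of $H^1$, rational freeness over $R\otimes\qq_\ell$, then an integral upgrade at each $\ell$) follows the same overall route as the paper, and your Lefschetz computation of $\Tr(\delta_N^k\mid H^1)=1-n$ is a perfectly good alternative to the Artin-character formula that the paper cites from Serre. The $\ell\nmid N$ case is likewise correctly disposed of, exactly as in Proposition~\ref{prop:free-away-from-N}.

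The genuine gap is in the case $\ell\mid N$, which you identify as the crux but then hand off with ``I would leverage the fixed subgroup $J_N^{\bmu_N}$.'' That subgroup only controls one piece of the problem. Writing $N=qM$ with $q=\ell^r$ and $\gcd(\ell,M)=1$, one has $\bar P_N(T)=(T-1)^{q-1}\prod_i h_i(T)^q$ over $\ff_\ell$, where the $h_i$ are the distinct irreducible factors of $\bar P_M(T)$, so $R/\ell R$ is a product of local Artinian rings, one for $(T-1)$ and one for each $h_i$. The fixed-point group $\FV_N=\ker(1-\delta_N)$ pins down the socle only of the $(T-1)$-component; it tells you nothing about the $h_i$-components unless $N$ is a power of $\ell$. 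The paper fills this in with two further ideas that are absent from your proposal: (1) an induction on the divisors of $N$, using Proposition~\ref{prop:characterization-of-JM} to identify $\ker P_M(\delta_N)$ with $\eta_M^* J_M$, so that the socles $W_{\ell,i}$ for $i\geq 1$ are computed inside $J_M[\ell]$ via the inductive hypothesis; and (2) the Gorenstein argument of Lemma~\ref{lem:gorenstein-ring-free} and Corollary~\ref{cor:criterion-free-for-Ap}, which is what lets one pass from ``each socle has the correct $\ff_\ell$-dimension and $\abs{J_N[\ell]}$ has the correct order'' to ``the module is free.'' That last step is not automatic for an arbitrary Artinian local ring; it uses that $\ff_\ell[T]/(h_i(T)^{t_i})$ is Gorenstein, hence self-dual, which is the engine behind the length inequality $l(M)\leq l(R)\dim_k M_0$ with equality iff $M$ is free. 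Without both the inductive reduction to smaller quotient curves and the Gorenstein duality, your claim that $J_N[\ell]$ is $R/\ell R$-free is asserted, not proved, whenever $\ell\mid N$.

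Your final paragraph on $K=\cc$ is fine once the $\ell$-adic statement is established, and the Nakayama lifting from $J_N[\ell]$ to $T_\ell J_N$ is correct (it is Lemma~\ref{lem:free-tate-module-p-torsion} in the paper).
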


Let $\zeta_N\in \bar{\qq}$ be a primitive $N$-th root of unity in a
fixed algebraic closure $\bar{\qq}$ of $\qq$. For each $D\mid N$, we
set $\zeta_D:=\zeta_N^{N/D}$. The embedding $\iota$ induces an
embedding
\begin{equation}
  \label{eq:13}
  \iota: \qq[T]/(P_N(T))\cong \prod_{D\mid N, D> 1}
  \qq(\zeta_D)\hookrightarrow \End^0(J_N).
\end{equation}

\begin{cor}\label{cor:optimal-embedding}
  We have $\End(J_N)\cap (\qq[T]/(P_N(T))) = \zz[T]/(P_N(T))$, where
  the intersection is taken within $\End^0(J_N)$.  In other words, the
  embedding $\iota: \zz[T]/(P_N(T))\hookrightarrow \End(J_N)$ is
  optimal.
\end{cor}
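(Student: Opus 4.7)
The plan is to deduce Corollary~\ref{cor:optimal-embedding} from Theorem~\ref{thm:main} by a local argument at each prime $\ell$. Set $R:=\zz[T]/(P_N(T))$ and $K:=\qq[T]/(P_N(T))$, and let $R':=\End(J_N)\cap K$, with the intersection taken inside $\End^0(J_N)$ via the embedding $\iota$ of \eqref{eq:embedding-into-end-of-JN}. The inclusion $R\subseteq R'$ is automatic, and since $\End(J_N)$ is a finitely generated torsion-free $\zz$-module, $R'$ is a $\zz$-order in $K$; hence $R'/R$ is a finite abelian group, and the target equality amounts to showing $R'/R=0$.

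The main step would verify that $R'\otimes\zz_\ell = R\otimes\zz_\ell$ for every prime $\ell\neq\Char(K)$. The faithful $\ell$-adic representation $\End(J_N)\otimes\zz_\ell\hookrightarrow\End_{\zz_\ell}(T_\ell J_N)$ sends $R'\otimes\zz_\ell$ into the scalar action of $K\otimes\qq_\ell$ on $T_\ell J_N$ induced by $\iota$. By Theorem~\ref{thm:main}, $T_\ell J_N$ is a free module over $R\otimes\zz_\ell$ of positive rank $n-1$ (assuming $n\geq 2$; the case $n=1$ is degenerate as $J_N$ is trivial). Consequently, a scalar in $K\otimes\qq_\ell$ preserves the lattice $T_\ell J_N$ if and only if it already lies in $R\otimes\zz_\ell$. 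This forces $R'\otimes\zz_\ell\subseteq R\otimes\zz_\ell$, with equality since $R\subseteq R'$.

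In characteristic zero, running this over every prime $\ell$ forces the finite group $R'/R$ to vanish, completing the proof. The principal obstacle is the case $\Char(K)=p>0$: the local argument only confines $R'/R$ to be $p$-power torsion, and ruling out the residual $p$-part is more delicate. A natural route is to lift the pair $(C_{f,N},\delta_N)$ together with its $\bmu_N$-action to the Witt vectors $W(K)$, producing an abelian scheme $\mathcal{J}_N$ over $W(K)$ whose geometric generic fiber is controlled by the characteristic zero case, and then invoke the injectivity of the specialization map on endomorphisms commuting with the lifted $\bmu_N$-action to transport the equality back to $J_N$. The delicate point is verifying that a hypothetical $\iota(x)\in R'\setminus R$ actually extends to the lift, which should follow from the commutativity of $K$ and the rigidity of scalar multiplications by elements of the lifted cyclotomic subring.
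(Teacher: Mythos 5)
Your Tate-module argument for primes $\ell\neq\Char(K)$ is correct and coincides in substance with the paper's proof: Theorem~\ref{thm:main} makes $T_\ell J_N$ free of rank $n-1$ over $R\otimes\zz_\ell$, and an element of $K\otimes\qq_\ell$ that preserves a free positive-rank lattice over $R\otimes\zz_\ell$ while acting through scalars must already lie in $R\otimes\zz_\ell$. (The paper phrases this as $E_p\cdot\Id\cap\Mat_{n-1}(R_p)=R_p\cdot\Id$ inside $\Mat_{n-1}(E_p)$, but the content is the same.)

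The gap is in your treatment of the prime $\ell=\Char(K)$: you declare this case ``delicate'' and propose a Witt vector lifting, but this is unnecessary because of an assumption you have not used. The setup requires $N$ coprime to $\Char(K)$, so if $\Char(K)=p>0$ then $p\nmid N$. The cokernel of $\zz[T]/(P_N(T))\hookrightarrow\prod_{D\mid N, D>1}\zz[\zeta_D]$ is $N$-torsion (Subsection~\ref{subsec:properties-of-zT-mod-PN}, and $\Disc(P_N(T))=\pm N^{N-2}$ by~\eqref{eq:14}), so $R\otimes\zz_\ell$ is the \emph{maximal} $\zz_\ell$-order in $K\otimes\qq_\ell$ for every $\ell\nmid N$ --- in particular for $\ell=\Char(K)$. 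An order containing a maximal order must equal it, so $R'\otimes\zz_p=R\otimes\zz_p$ with no Tate module needed. This is in fact how the paper disposes of all $\ell\nmid N$ at once, reserving the freeness argument only for $p\mid N$. Beyond being superfluous, your Witt vector route is itself not justified as stated: the specialization map $\End(\mathcal{J}_N)\to\End(J_N)$ is injective but generally not surjective, so it is far from automatic that a hypothetical $\iota(x)\in R'\setminus R$ lifts to the Witt vector deformation, and ``rigidity of scalar multiplications'' does not supply that lift.
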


\begin{thm}\label{thm:results-on-JN-new}
  Given any $G(T)=\prod_{s=1}^t\Phi_{D_s}(T)\in \zz[T]$ such that
  $G(T)\mid P_N(T)$, the kernel of $G(\delta_N): J_N\to J_N$ is an
  abelian subvariety of $J_N$ of dimension $(n-1)\deg
  G(T)/2$. Moreover, $\ker G(\delta_N)=\sum_{s=1}^t \ker
  \Phi_{D_s}(\delta_N)$.  Let $J_N^\nw=\ker \Phi_N(\delta_N)$, then
  $J_N^\nw$ is isomorphic to the dual $(J_N^\nw)^\vee$. If $q=p^r$ is
  a prime power with $p\neq 2$, then there exists a principal
  polarization $\widetilde{\lambda}_q^\nw: J_q^\nw \to
  (J_q^\nw)^\vee$.
\end{thm}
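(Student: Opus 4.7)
The plan leans on Theorem \ref{thm:main} for the first two parts and on Rosati-involution considerations for the latter two.

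For parts (1) and (2), decompose via the Chinese Remainder Theorem
\[
\zz_\ell[T]/(P_N(T)) \cong \prod_{D\mid N,\, D>1} \zz_\ell[T]/(\Phi_D(T))
\]
to write $T_\ell J_N = \bigoplus_{D} M_D$, with each $M_D$ free of rank $n-1$ over $\zz_\ell[T]/(\Phi_D(T))$. The kernel of $G(\delta_N)$ acting on $T_\ell J_N$ then equals $\bigoplus_{s=1}^t M_{D_s}$, a free $\zz_\ell$-module of rank $(n-1)\deg G$. For any $\phi\in\End(J_N)$, the identity component $B:=(\ker\phi)^\circ$ is an abelian subvariety with $T_\ell B = \ker(\phi\mid T_\ell J_N)$; this yields $\dim(\ker G(\delta_N))^\circ = (n-1)\deg G/2$, proving (1). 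For (2), each $\ker \Phi_{D_s}(\delta_N)^\circ$ sits inside $\ker G(\delta_N)^\circ$, and their Tate modules together span $\bigoplus_s M_{D_s}$, so $\sum_s \ker \Phi_{D_s}(\delta_N)^\circ = \ker G(\delta_N)^\circ$.

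For (3), the canonical principal polarization $\lambda_J: J_N \xrightarrow{\sim} J_N^\vee$ induces the Rosati involution on $\End^0(J_N)$; since $\delta_N$ arises from an automorphism of $C_{f,N}$ and hence preserves the canonical polarization, this Rosati involution sends $\delta_N \mapsto \delta_N^{-1}$. Because $\Phi_N(T^{-1}) = T^{-\varphi(N)}\Phi_N(T)$ for $N\ge 2$, the map $\lambda_J$ carries $J_N^\nw$ isomorphically onto the analogous subvariety $\ker \Phi_N(\delta_N^\vee)^\circ \subseteq J_N^\vee$. Composing with the surjection $J_N^\vee \twoheadrightarrow (J_N^\nw)^\vee$ dual to the inclusion $J_N^\nw \hookrightarrow J_N$ produces the induced polarization $\mu: J_N^\nw \to (J_N^\nw)^\vee$, which is at least an isogeny. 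To realize this as an abelian-variety isomorphism, I would exploit the $\zz[\zeta_N]$-action on $J_N^\nw$ (on which Rosati restricts to complex conjugation $\zeta_N\mapsto\zeta_N^{-1}$): the polarization pairing on $T_\ell J_N^\nw$ becomes a Hermitian form over $\zz_\ell[\zeta_N]$ on the free rank-$(n-1)$ module $M_N$, and one should exhibit an element $u\in\qq(\zeta_N)^\times\subseteq \End^0(J_N^\nw)$ (totally positive under complex conjugation) for which $u\cdot\mu$ is an isomorphism.

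For (4), specialize to $N=q=p^r$ with $p$ odd. Compute the degree of $\mu$ explicitly via the ramification data of $\eta$; the coprimality $\gcd(e_i,q)=1$ and $\gcd(\deg f,q)=1$ combined with the oddness of $p$ should allow cancellation by a suitable totally positive element of $\qq(\zeta_q)^+\subseteq \End^0(J_q^\nw)$, yielding $\widetilde{\lambda}_q^\nw$ of degree $1$. The main obstacle is exactly the passage from isogeny to isomorphism in (3) and the explicit degree bookkeeping in (4); both require careful analysis of the integral $\zz[\zeta_N]$-module structure of $T_\ell J_N^\nw$ together with its Hermitian polarization form, and the hypothesis $p\neq 2$ in (4) will enter through a $2$-adic anomaly in this Hermitian form that otherwise obstructs principality.
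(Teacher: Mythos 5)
The argument for the first two assertions has a gap at its center. The Chinese Remainder Theorem decomposition $\zz_\ell[T]/(P_N(T))\cong\prod_{D\mid N,\,D>1}\zz_\ell[T]/(\Phi_D(T))$ is false for primes $\ell\mid N$: when $N=p^r$, for example, $\ff_p[T]/(\bar P_N(T))\cong\ff_p[T]/((T-1)^{N-1})$ is a local ring and not a product, as the paper itself emphasizes in Subsection~\ref{subsec:properties-of-zT-mod-PN}. More fundamentally, computing $T_\ell(\ker G(\delta_N))$ only determines the dimension of the \emph{identity component} $(\ker G(\delta_N))^\circ$; it says nothing about the finite \'etale quotient $\pi_0(\ker G(\delta_N))$, whose vanishing is exactly what the first two claims assert. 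That $\pi_0$ can only be supported at primes dividing $\Disc P_N(T)$, i.e.\ at $\ell\mid N$ --- precisely the primes your CRT misses. The freeness in Theorem~\ref{thm:main} \emph{does} force connectedness, but extracting this requires the counting arguments of Lemma~\ref{lem:reduced-kernel}, Corollary~\ref{cor:criterion-for-kernel-connected} and Theorem~\ref{thm:equivalence-tate-free-kernel-connected}, which compare $\abs{(\ker G(\delta_N))[\ell]}$ against $\ell^{2\dim}$, rather than trying to split $T_\ell J_N$.

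For $J_N^\nw\cong(J_N^\nw)^\vee$, you propose to rescale the induced polarization $\mu=\lambda_N^\nw$ by a scalar $u\in\qq(\zeta_N)^\times$ so that $u\cdot\mu$ becomes an isomorphism. The difficulty is that $u\cdot\mu$ is a priori only an element of $\Hom^0(J_N^\nw,(J_N^\nw)^\vee)$, and you give no reason why it should lie in $\Hom(J_N^\nw,(J_N^\nw)^\vee)$; that integrality is the entire content of the claim. The paper instead compares two short exact sequences of abelian varieties with the same kernel $\jmath(J_N^{\mathrm{old}})$, namely (\ref{eq:exact-seq-with-ker-JN-old}) and (\ref{eq:20}), to manufacture an honest isomorphism $\kappa_N$, and then identifies $\kappa_N^{-1}\circ\lambda_N^\nw$ with $G(\delta_N)\mid_{J_N^\nw}$. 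Imposing total positivity on $u$ at this stage is both unnecessary --- it matters only in part (4), for showing $\widetilde\lambda_q^\nw$ is a \emph{polarization} --- and in tension with the actual scalar $G(\zeta_N)^{-1}$. Finally, for $q=p^r$, your instinct that $p=2$ fails for arithmetic reasons is right, but the obstruction is a bare parity check rather than a ``$2$-adic anomaly of a Hermitian form'': one computes $c_q=(1-\zeta_q)^{p^{r-1}-1}$ up to a unit and needs $(p^{r-1}-1)/2\in\nn$ so as to write $c_q=\tau_q^2$ with $\tau_q\in\zz[\zeta_q]$; this fails exactly when $p=2$ and $r\geq 2$.
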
 

\begin{rem}
  In this remark, the ground field $K$ is not necessarily assumed to
  be algebraically closed. Let $\bar{K}$ be its algebraic
  closure. Since $J_N^\nw=\ker \Phi_N(\delta_N)$, there exists an
  embedding $\zz[\zeta_N]\hookrightarrow \End(J_N^\nw)$ given by
  $\zeta_N\mapsto \delta_N\mid_{J_N^\nw}$. Suppose that $\Char(K)=0$,
  $q:=N=p^r$ is a prime power, and $f(x)$ has no multiple roots.  In a
  series of papers \cite{MR2040573}, \cite{MR2166091},
  \cite{MR2349666}, \cite{MR2471095}, Yuri G. Zarhin showed that
  $\End_{\bar{K}}(J_q^\nw)=\zz[\zeta_q]$ assuming that $\deg f(x)\geq
  4$ and $f(x)$ is irreducible over $K$ with ``large'' Galois group
  (For example, $\Gal(f)$ is either the full symmetric group $\Sn$ or
  the alternating group $\An$ when $\deg f(x)\geq 5$, or
  $\Gal(f)=\mathrm{S}_4$ when $\deg f(x)=4$). When $K=\cc$ and $\deg
  f(x)=3$, the endomorphism algebra $\End^0(J_q^\nw)$ has been
  classified. In particular, if $p>7$, then $\End^0(J_q^\nw)$ is
  either $\qq(\zeta_q)$, a quadratic field extension of
  $\qq(\zeta_q)$, or $\qq(\zeta_q)\oplus \qq(\zeta_q)$. The generic
  case was treated in \cite{MR2349666} by Zarhin and the
  classification was given in \cite{xue-yu} jointly by the second and
  third named authors. Now suppose $K\subseteq \cc$, $\deg f(x)\geq
  3$, and $\End^0_{\bar{K}}(J_q^\nw)=\qq(\zeta_q)$. With some further
  mild conditions on $q$, the special Mumford-Tate group of $J_q^\nw$
  has been determined in another series of papers \cite{MR2831828},
  \cite{MR2657449}, \cite{MR2644383} jointly by Zarhin and the second
  named author.
\end{rem}

The paper is organized as follows. Section 2 studies the kernel of
endomorphisms of abelian varieties. The theorems above and their
corollaries are proved in Section 3, where we study the superelliptic
Jacobian $J_N$ and apply the results obtained in Section 2. Section 4
contains some arithmetic results that are used in the previous
sections.

\textbf{Acknowledgment:} This paper originated as a question on the
group of invariant points of superelliptic Jacobians raised to the
second named author by Chao Li of Harvard University during the
Special Week in Arithmetic and Number Theory Conference (July 8--12,
2013) in the National Center for Theoretic Science (NCTS) in Hsinchu,
Taiwan. We would like to thank him for the fruitful discussions. The
first named author is currently visiting NCTS during the fall semester
of 2013. He is partially supported by National Science Foundation
grant DMS1101368.  The second named author is partially supported by
the grant NSC 102-2811-M-001-090. The third named author is partially
supported by the grants NSC 100-2628-M-001-006-MY4 and AS-98-CDA-M01.

\section{Decomposition of Abelian varieties}
Throughout this section, $F(T)$ and $G(T)$ denote polynomials in
$\zz[T]$. Let $A$ be an abelian variety of positive dimension over
$K$. The minimal polynomial of an endomorphism of $A$ is monic over
$\zz$ \cite[Theorem 19.4]{MR2514037}. If $\phi\in \End(A)$ has
minimal polynomial $P(T)\in \zz[T]$, then there is an embedding
$\zz[T]/(P(T))\hookrightarrow \End(A)$ with $T\mapsto \phi$.  Given
$F(T)\mid P(T)$, the kernel of $\beta:=F(\phi)$ is a group scheme over
$K$. In this section, we give a criterion to determine when $\ker
\beta$ is an abelian subvariety of $A$. This question turns out to be
closely related to the torsion subgroup of $A(K)$.

Suppose $R$ is a commutative ring. For any $F(T)\in \zz[T]$, we write
$\bar{F}(T):=F(T)\otimes 1\in \zz[T]\otimes_\zz R=R[T]$. For example,
if $R=\zmod{m}$, then $\bar{F}(T)$ is just $F(T)$ modulo $m$. For
each $m\in \zz$ coprime to $\Char(K)$, the $m$-torsion group
$A[m]\subset A(K)$ is naturally a $(\zmod{m})[T]/(\bar{P}(T))$-module.

\begin{sect}\label{subsec:resultant-and-generator-of-intersection}
  Let $F(T), G(T)\in \zz[T]$ be two monic polynomials with $\gcd(F(T),
  G(T))=1$.  The quotient ring $R:=\zz[T]/(F(T))$ is a free
  $\zz$-module of rank $\deg F(T)$. By an abuse of notation, we still
  write $G(T)$ for the canonical image of $G(T)$ in $R$.  The
  resultant of $F(T)$ and $G(T)$ is defined to be (See \cite[Section
  IV.6.6]{MR1080964})
  \begin{equation}
    \label{eq:19}
   \Res(F(T), G(T))= \Nm_{R/\zz} (G(T))=\prod_{F(x)=0, G(y)=0}
  (x-y)
  \end{equation}
  where $\Nm_{R/\zz}: R\to \zz$ is the norm map, and $x,y\in
  \bar{\qq}$ are roots of $F(T)$ and $G(T)$ respectively.  Since
  $F(T)$ and $G(T)$ are coprime, $\Res(F(T), G(T))\neq 0$. There
  exists $a(T), b(T)\in \zz[T]$ such that \begin{equation}
  \label{eq:gcd-reduced-lem}
  a(T)F(T)+b(T)G(T)=\Res(F(T), G(T))\in \zz.
\end{equation}
Both $a(T)$ and $b(T)$ are uniquely determined if we further require
that $\deg a(T)< \deg G(T)$ (or equivalently, $\deg b(T)< \deg
F(T)$). The resultant may be calculated as the determinant of a matrix
whose entries are coefficients of $F(T)$ and $G(T)$.

  Let $\bar{F}(T):=F(T)\otimes 1\in K[T]=\zz[T]\otimes_\zz K$, and define $\bar{G}[T]$
  similarly. Then $\bar{F}(T)$ and $\bar{G}(T)$ share a common root in
  $K$ if and only if $\Res(F(T), G(T))$ is divisible by $\Char(K)$.
  We write $\Disc F(T)\in \zz$ for the discriminant of $F(T)$
  (\cite[Section IV.6.7]{MR1080964}). Then $\bar{F}(T)$ is separable
  if and only if $\Disc(F(T))$ is coprime to $\Char(K)$. Clearly,
\[\Res(F(T), G(T))\mid \Disc(F(T)G(T)).\]
We refer to Subsection~\ref{subsec:arithmetic-resultant} for some
further discussion of $\Res(F(T), G(T))$. 
\end{sect}

\begin{lem}\label{lem:reduced-kernel}
  Let $P(T)=F(T)G(T)\in \zz[T]$ be the minimal polynomial of
  $\phi\in \End(A)$, and $\beta:=F(\phi),
  \gamma:=G(\phi)\in \End(A)$. Suppose that $\Res(F(T), G(T))$ is
  coprime to $\Char(K)$. Then both $\ker\beta$ and $\ker\gamma$ are
  reduced group schemes over $K$, and
\[ \dim \ker \beta+\dim \ker \gamma= \dim A.\]
\end{lem}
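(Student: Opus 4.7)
The plan is to combine two algebraic identities in $\End(A)$: the annihilation $\beta\gamma = P(\phi) = 0$, together with the Bezout relation \eqref{eq:gcd-reduced-lem} evaluated at $\phi$,
\[ a(\phi)\,\beta + b(\phi)\,\gamma = R\cdot \id_A, \]
where $R := \Res(F(T), G(T))$ is a nonzero integer coprime to $\Char(K)$ by hypothesis.

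First I would establish the dimension formula. From $\beta\gamma = 0$ we have $\gamma(A) \subseteq \ker\beta$ and, symmetrically, $\beta(A) \subseteq \ker\gamma$. Since $a(\phi)$ and $b(\phi)$ commute with $\beta$ and $\gamma$ (all being polynomials in $\phi$), the Bezout identity shows that every element of the form $R\cdot x$ lies in $\beta(A) + \gamma(A)$; as $[R]_A$ is surjective this gives $\beta(A) + \gamma(A) = A$. Combined with $\dim\beta(A) + \dim\ker\beta = \dim A$ and its analog for $\gamma$, I obtain $\dim\ker\beta + \dim\ker\gamma \geq \dim A$. The Bezout identity also forces $\ker\beta \cap \ker\gamma \subseteq A[R]$, which is a finite \'etale group scheme (since $\gcd(R, \Char K) = 1$). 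Thus $(\ker\beta)^0$ and $(\ker\gamma)^0$ meet in a finite group, and their sum --- still an abelian subvariety of $A$ --- has dimension $\dim\ker\beta + \dim\ker\gamma$; the reverse inequality follows, and equality is forced.

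Next I would prove reducedness via the tangent space at the origin. Writing $\psi \in \End(\Lie A)$ for the differential of $\phi$, functoriality gives $T_0(\ker\beta) = \ker F(\psi)$ and $T_0(\ker\gamma) = \ker G(\psi)$. The hypothesis $\gcd(R, \Char K) = 1$ is precisely the assertion that $\bar F(T), \bar G(T) \in K[T]$ are coprime, so the Chinese Remainder Theorem yields $K[T]/(P) \cong K[T]/(F) \times K[T]/(G)$. Viewing $\Lie(A)$ as a $K[T]/(P)$-module via $T \mapsto \psi$, this product decomposition splits it as $\Lie(A) = \ker F(\psi) \oplus \ker G(\psi)$, whence $\dim_K T_0(\ker\beta) + \dim_K T_0(\ker\gamma) = \dim A$. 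Combined with the dimension identity from the previous paragraph and the universal inequality $\dim T_0(\ker\beta) \geq \dim\ker\beta$ (valid for any finite-type group scheme), equality must hold termwise, so both $\ker\beta$ and $\ker\gamma$ are smooth at $0$. Being group schemes over the algebraically closed field $K$, they are therefore reduced.

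The main subtlety --- and the one place where the characteristic hypothesis enters essentially --- is the reducedness step. Without $\gcd(R, \Char K) = 1$ the polynomials $F$ and $G$ could share roots over $K$, the CRT decomposition of $\Lie(A)$ would fail, and the kernels could acquire nontrivial infinitesimal components. The dimension identity itself uses the hypothesis only lightly, to guarantee that $A[R]$ is finite so that the two identity-component subvarieties meet in dimension zero.
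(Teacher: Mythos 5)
Your argument is correct and follows essentially the same route as the paper: both use the Chinese Remainder Theorem decomposition of $\Lie(A)$ over $K[T]/(\bar{P}(T))$ to compute the tangent-space dimensions, the Bezout relation $a(T)F(T)+b(T)G(T)=\Res(F,G)$ (evaluated at $\phi$) together with divisibility of $A(K)$ to show $\beta(A)+\gamma(A)=A$ and $\ker\beta\cap\ker\gamma\subseteq A[\Res(F,G)]$ is finite, and then conclude via the universal inequality $\dim\ker\beta\leq\dim_K T_0(\ker\beta)$ forcing equality. The only cosmetic difference is that you first establish the dimension formula geometrically by two inequalities and then invoke the Lie-algebra count for reducedness, whereas the paper folds both into a single sandwich chain; the content is identical.
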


\begin{proof}
  Let $\Lie(\beta): \Lie(A)\to \Lie(A)$ be the induced endomorphism of
  the Lie algebra of $A$. To show that $\ker\beta$ is reduced, it is
  enough to prove that
  \[\dim \ker(\beta) =\dim_K(\ker (\Lie(\beta)))=\dim_K(\Lie
  (\ker\beta)).\] A priori, $\dim_K(\Lie(\ker(\beta)))\geq \dim
  \ker(\beta)$. Similarly for $\gamma$.

  The subring $\zz[\phi]\subseteq \End(A)$ generated by $\phi$ is
  isomorphic to $\zz[T]/(P(T))$. So $\Lie(A)$ carries a natural
  $\zz[T]/(P(T))\otimes_\zz K$-module structure. Since $\Res(F(T),
  G(T))$ is coprime to $\Char K$, $\bar{F}(T)$ and $\bar{G}(T)$
  share no common factors. By the Chinese Reminder Theorem,
  \[ \left(\zz[T]/(P(T))\right)\otimes_\zz K = K[T]/(\bar{P}(T))\cong
  K[T]/(\bar{F}(T))\oplus K[T]/(\bar{G}(T)).\] Correspondingly,
  $\Lie(A)=\Lie(A)_F\oplus \Lie(A)_G$. Here $\Lie(A)_F=\ker
  (\Lie(\beta))$, which is naturally equipped with a
  $K[T]/(\bar{F}(T))$-module structure, and $\Lie(A)_G=\ker
  (\Lie(\gamma))$, which has a natural $K[T]/(\bar{G}(T))$-module
  structure.

Necessarily, $F(T)$ and $G(T)$ are coprime over $\qq$. For simplicity,
let $m:=\Res(F(T), G(T))$. We may choose $a(T),b(T)\in \zz[T]$ and such
that (\ref{eq:gcd-reduced-lem}) holds.  Then $\ker\beta \cap \ker
\gamma \subseteq A[m]$, a finite \'etale group scheme over $K$.  Since
$A(K)$ is divisible (\cite[Application 6.2]{MR2514037}),
\[ A=\beta(A) +\gamma(A)\subseteq \ker \gamma + \ker \beta.\]
We have 
\[
\begin{split}
\dim A&= \dim \ker \beta + \dim \ker \gamma \leq \dim_K \ker
(\Lie(\beta))+\dim_K \ker (\Lie(\gamma))\\&=\dim_K \Lie(A)=\dim A.  
\end{split}
\]
It follows that $\dim \ker(\beta)=\dim_K\ker(\Lie(\beta))$ and
similarly for $\gamma$. 
\end{proof}

\begin{cor}\label{cor:criterion-for-kernel-connected}
  We keep the notation and assumptions of
  Lemma~\ref{lem:reduced-kernel}.  Let $d:=\dim \ker \beta$. For any
  prime $p\neq \Char(K)$, $\abs{(\ker \beta)[p]}\geq p^{2d}$ , and
  $\ker\beta $ is connected if and only if the equality holds for any
  $p\mid \Res(F(T), G(T))$.
\end{cor}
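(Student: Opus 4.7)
The plan is to reduce the statement to a question about the component group. By Lemma~\ref{lem:reduced-kernel}, the group scheme $\ker\beta$ is reduced of dimension $d$, so its identity component $B:=(\ker\beta)^0$ is an abelian subvariety of $A$ of dimension $d$, and $\abs{B[p]}=p^{2d}$ for every prime $p\neq\Char(K)$. Since $B[p]\subseteq(\ker\beta)[p]$, the lower bound $\abs{(\ker\beta)[p]}\geq p^{2d}$ is immediate.

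For the equivalence, let $\pi_0:=\ker\beta/B$, a finite \'etale group scheme over $K$. Applying the snake lemma to the multiplication-by-$p$ map acting on the short exact sequence $0\to B\to\ker\beta\to\pi_0\to 0$, and using that $[p]\colon B\to B$ is surjective (as $p\neq\Char(K)$), one obtains a short exact sequence
\[
0\to B[p]\to(\ker\beta)[p]\to\pi_0[p]\to 0,
\]
so $\abs{(\ker\beta)[p]}=p^{2d}\cdot\abs{\pi_0[p]}$. Hence equality for a given $p$ is equivalent to $\pi_0[p]=0$, and $\ker\beta$ is connected iff $\pi_0=0$. Consequently it suffices to prove the sharper claim that $\pi_0$ is annihilated by $m:=\Res(F(T),G(T))$; granted this, $\pi_0[p]=0$ for every $p\mid m$ forces $\pi_0=0$.

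I expect the main work to lie in verifying $m\cdot\pi_0=0$, and my approach is to compare $\ker\beta$ with the analogous object for $\gamma$. Set $C:=(\ker\gamma)^0$; by Lemma~\ref{lem:reduced-kernel} and its proof, $B+C=A$ and $B\cap C$ is finite, so the addition map $\psi\colon B\times C\to A$, $(x,y)\mapsto x+y$, is an isogeny. Because $\beta$ vanishes on $B$, one has $\beta\circ\psi=\beta|_C\circ\pr_C$, whence $\psi^{-1}(\ker\beta)=B\times\ker(\beta|_C)$. On the other hand, $\gamma|_C=0$, so the Bezout identity $a(\phi)\beta+b(\phi)\gamma=m$ from (\ref{eq:gcd-reduced-lem}) restricts on $C$ to give $a(\phi|_C)\circ\beta|_C=m\cdot\id_C$, forcing $\ker(\beta|_C)\subseteq C[m]$.

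A brief component count then finishes the argument: the connected components of $B\times\ker(\beta|_C)$ are $B\times\{y\}$ for $y\in\ker(\beta|_C)(K)$, and two such components share the same image under $\psi$ exactly when their $y$-coordinates differ by an element of $B\cap C$. This yields $\pi_0\cong\ker(\beta|_C)(K)/(B\cap C)(K)$, a quotient of a subgroup of $C[m]$, hence annihilated by $m$. The sole delicate point is this last identification of $\pi_0$; everything else is formal once the right isogeny $\psi$ is in place.
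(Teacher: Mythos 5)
Your proposal is correct, and the first half (snake lemma, reduction to showing $m\cdot\pi_0=0$ where $m=\Res(F,G)$) coincides with the paper's argument. Where you diverge is in proving that $\pi_0$ is $m$-torsion. You set up the isogeny $\psi\colon B\times C\to A$ with $B=(\ker\beta)^\circ$ and $C=(\ker\gamma)^\circ$, pull back $\ker\beta$ to get $B\times\ker(\beta|_C)$, and then identify $\pi_0\cong\ker(\beta|_C)(K)/(B\cap C)(K)$ by a component count, with $\ker(\beta|_C)\subseteq C[m]$ coming from the Bezout identity restricted to $C$. The paper gets the same conclusion in a single line: it already has $(\ker\beta)^\circ=\gamma(A)$ from the dimension count of Lemma~\ref{lem:reduced-kernel}, so for any $x\in\ker\beta$ the identity $a(\phi)F(\phi)+b(\phi)G(\phi)=m$ gives $mx=b(\phi)\gamma x\in\gamma(A)=(\ker\beta)^\circ$, i.e.\ $m\cdot\ker\beta\subseteq(\ker\beta)^\circ$ directly. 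Your route is longer and carries some group-scheme bookkeeping (you need $\ker(\beta|_C)$ \'etale, which you should note follows from $m$ being coprime to $\Char(K)$, in order to read the components of $B\times\ker(\beta|_C)$ off the $K$-points), but it does yield the sharper structural fact $\pi_0\cong\ker(\beta|_C)/(B\cap C)$, which the paper's proof never makes explicit. If you keep your version, the ``sole delicate point'' you flag is fine, but the whole isogeny apparatus can be discarded in favor of the paper's direct computation applied to an arbitrary $x\in\ker\beta$.
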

\begin{proof}
  Since $\ker \beta$ is reduced, its identity component $(\ker
  \beta)^\circ$ is an abelian subvariety of $A$, and $\ker \beta$ is
  an extension of $(\ker\beta )^\circ$ by a finite \'etale group
  scheme $\pi_0(\ker\beta)$ over $K$:
  \begin{equation}
    \label{eq:extension-ab-by-finite-etale-gpsch}
    0 \to (\ker\beta )^\circ \to \ker \beta \to \pi_0(\ker\beta)\to
    0.  
  \end{equation}
  Because $(\ker\beta)^\circ(K)$ is divisible, it follows from the
  Snake Lemma \cite[Exercise A.3.10]{MR1322960} that there is an
  exact sequence
  \begin{equation}
    \label{eq:extension-of-torsion-gp}
    0\to (\ker\beta)^\circ[p]\to (\ker \beta)[p]\to
    \pi_0(\ker\beta)[p]\to 0
  \end{equation}
  for any prime $p$. In particular, if $p\neq \Char(K)$,
  \begin{equation}
    \label{eq:11}
\abs{(\ker
    \beta)[p]}= \abs{(\ker\beta)^\circ[p]}\cdot
  \abs{\pi_0(\ker\beta)[p]}\geq \abs{(\ker\beta)^\circ[p]}=p^{2d}. 
  \end{equation}

  Recall that $\gamma(A)\subseteq (\ker\beta)^\circ$ and $\dim
  \gamma(A)= \dim A - \dim \ker\gamma = \dim (\ker\beta)^\circ$, so
  $(\ker\beta)^\circ = \gamma(A)$. Let $m:=\Res(F(T), G(T))$, and
  $a(T), b(T)\in \zz[T]$ be polynomials such that
  (\ref{eq:gcd-reduced-lem}) holds.  For all $x\in \ker\beta$, we
  have \[mx= a(\phi)\beta x + b(\phi) \gamma x= b(\phi)\gamma x\in
  \gamma(A)=(\ker\beta)^\circ.\] It follows that $\forall \,y\in
  \pi_0(\ker\beta)$, $my=0$.  Therefore, $\pi_0(\ker\beta)$ is trivial
  if and only if $\pi_0(\ker\beta)[p]$ is trivial for all $p\mid m$.
  By (\ref{eq:11}), this holds if and only if $\abs{(\ker \beta)[p]}=
  p^{2d}= \abs{(\ker\beta)^\circ[p]}$ for all $p\mid m$.
\end{proof}

\begin{lem}\label{lem:free-tate-module-p-torsion}
  Suppose that $\dim A =r \deg P(T)/2\in \nn$ for some $r\in \nn$.  Let
  $\ell$ be a prime distinct from $\Char(K)$.  The following are
  equivalent:
  \begin{enumerate}
  \item[(\ref{lem:free-tate-module-p-torsion}.i\phantom{i})] $A[\ell]$
    is a free $\ff_\ell[T]/(\bar{P}(T))$-module of rank $r$.
  \item[(\ref{lem:free-tate-module-p-torsion}.ii)] $T_\ell A$ is a
    free $\zz_\ell[T]/(P(T))$-module of rank $r$.
  \end{enumerate}
\end{lem}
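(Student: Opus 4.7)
The plan is to observe that the two statements are linked cleanly by reduction modulo $\ell$, once one exploits the fact that $T_\ell A$ is already a free $\zz_\ell$-module of rank $2\dim A = r\deg P(T)$. Set $R:=\zz_\ell[T]/(P(T))$, a free $\zz_\ell$-algebra of rank $\deg P(T)$, and note that $R/\ell R = \ff_\ell[T]/(\bar P(T))$. Since $T_\ell A$ is $\zz_\ell$-flat, the natural map $T_\ell A \otimes_{\zz_\ell}\ff_\ell \to A[\ell]$ is an $R$-linear isomorphism.

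For the direction (ii) $\Rightarrow$ (i), I would simply reduce modulo $\ell$: a free $R$-module of rank $r$ becomes a free $R/\ell R$-module of rank $r$, and by the previous paragraph this quotient is $A[\ell]$.

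For (i) $\Rightarrow$ (ii), the strategy is to lift and count ranks. Choose a basis $\bar x_1,\dots,\bar x_r$ of $A[\ell]$ over $R/\ell R$, lift to elements $x_1,\dots,x_r\in T_\ell A$, and form the $R$-linear map
\begin{equation*}
\psi : R^r \longrightarrow T_\ell A,\qquad e_i\longmapsto x_i .
\end{equation*}
Now $R^r$ and $T_\ell A$ are both free $\zz_\ell$-modules; their ranks are $r\deg P(T)$ and $2\dim A$ respectively, which coincide by the hypothesis $\dim A = r\deg P(T)/2$. By construction, $\psi \otimes_{\zz_\ell}\ff_\ell$ is the isomorphism $(R/\ell R)^r\xrightarrow{\cong} A[\ell]$ sending $e_i$ to $\bar x_i$. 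A $\zz_\ell$-linear map between free $\zz_\ell$-modules of the same finite rank whose reduction modulo $\ell$ is an isomorphism is itself an isomorphism (Nakayama over the discrete valuation ring $\zz_\ell$, or equivalently, its determinant is a $\zz_\ell$-unit). Hence $\psi$ is an $R$-linear isomorphism, giving (ii).

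There is no real obstacle here: the key point, and the only place the rank hypothesis $\dim A = r\deg P(T)/2$ is used, is the $\zz_\ell$-rank match that lets Nakayama over $\zz_\ell$ upgrade the mod-$\ell$ isomorphism to an honest isomorphism of $R$-modules. One might worry about having to do Nakayama over the semilocal but non-local ring $R$, but the rank comparison makes this unnecessary and keeps the argument entirely within the easy setting of $\zz_\ell$.
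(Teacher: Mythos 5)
Your proof is correct, and it takes a genuinely different route from the paper's. The paper proves (i) $\Rightarrow$ (ii) by working one finite level at a time: for each $i$ it sets $M=A[\ell^i]$, $R_i=(\zmod{\ell^i})[T]/(\bar P(T))$, applies Nakayama over the Artinian ring $R_i$ to produce a surjection $R_i^r\twoheadrightarrow M$, and then compares cardinalities ($\abs{M}=\ell^{ir\deg P}= \abs{R_i^r}$) to force injectivity; passing from freeness at every finite level to freeness of $T_\ell A$ over $\varprojlim R_i$ is then left implicit. You instead work once and for all at the level of the Tate module, replacing the cardinality count by a $\zz_\ell$-rank count (both $R^r$ and $T_\ell A$ are free $\zz_\ell$-modules of rank $r\deg P(T)=2\dim A$), lifting a mod-$\ell$ basis to a map $\psi\colon R^r\to T_\ell A$, and invoking Nakayama over $\zz_\ell$ (or the determinant criterion) to conclude $\psi$ is an isomorphism. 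Your version is slightly more economical in that it avoids both the Artinian Nakayama argument and the unstated limit step; the paper's version avoids ever mentioning $\zz_\ell$-freeness of $T_\ell A$, though it uses the equivalent fact $\abs{A[\ell^i]}=\ell^{2i\dim A}$. Both are sound, and your identification of where the hypothesis $\dim A=r\deg P(T)/2$ enters (the rank match) mirrors exactly where the paper uses it (the cardinality match).
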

\begin{proof}
  Since $A[\ell]\cong T_\ell A\otimes_{\zz_\ell} \ff_\ell$, clearly
  (\ref{lem:free-tate-module-p-torsion}.ii)$\Rightarrow$(\ref{lem:free-tate-module-p-torsion}.i). To
  show that
  (\ref{lem:free-tate-module-p-torsion}.i)$\Rightarrow$(\ref{lem:free-tate-module-p-torsion}.ii),
  it is enough to show that $A[\ell^i]$ is a free
  $(\zmod{\ell^i})[T]/(\bar{P}(T))$-module of rank $r$ for all $i\in
  \nn$.  Now fix $i$, and let $M:=A[\ell^i]$ and
  $R:=(\zmod{\ell^i})[T]/(\bar{P}(T))$. The ideal $\a=(\ell)\subseteq
  R$ is nilpotent. We have $M/\a M = M/\ell M\cong A[\ell]$. By
  assumption, $M/\a M$ is a free ($R/\a$)-module of rank $r$. It
  follows from Nakayama's lemma \cite[Corollary 4.8]{MR1322960} that
  $M$ can be generated by $r$ elements.  In other words, we have a
  surjective map $R^r\twoheadrightarrow M$. On the other hand,
  \[ \abs{M}= \abs{A[\ell^i]}= (\ell^i)^{2\dim A}=\ell^{ir\deg P(T)}=
  \abs{R^r}. \] Therefore, the map must be injective as well, and
  hence $M$ is free of rank $r$.
\end{proof}

We refer to \cite[Chapter 21]{MR1322960} for the concept of Gorenstein
rings.
\begin{lem}
  The Artinian ring $R=\zz[T]/(m, P(T))$ is Gorenstein for all
  positive integer $m>1$.
\end{lem}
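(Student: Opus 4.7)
The plan is to realize $R$ as a complete intersection in the regular ring $\zz[T]$ and invoke the standard result that complete intersections are Gorenstein. First, $R$ is indeed Artinian: since $P(T)$ is monic, $R$ is a free $\zz/m\zz$-module of rank $\deg P(T)$, hence finite. The three ingredients needed from general commutative algebra are that (i) $\zz[T]$ is a regular Noetherian ring of Krull dimension~$2$, (ii) a quotient of a regular local ring by a regular sequence is a complete intersection and hence Gorenstein, and (iii) the Gorenstein property of a Noetherian ring is local on $\Spec$.

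The key technical point is that $(m,\,P(T))$ forms a regular sequence of length $2$ in $\zz[T]$. The element $m\neq 0$ is a nonzerodivisor in the domain $\zz[T]$, and modulo $m$ the polynomial $P(T)$ reduces to a monic element of $(\zz/m\zz)[T]$, which is a nonzerodivisor by the usual leading-coefficient argument (monic polynomials in a polynomial ring over any commutative ring are nonzerodivisors). Hence $(m, P(T))$ is regular in $\zz[T]$.

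Now let $\mathfrak{n}$ be any maximal ideal of $R$ and let $\mathfrak{M}\subset \zz[T]$ denote its preimage, so that $\mathfrak{M}\supseteq (m, P(T))$. Since $R$ is zero-dimensional and $\zz[T]$ has Krull dimension $2$, $\mathfrak{M}$ has height $2$, and $\zz[T]_\mathfrak{M}$ is a regular local ring of dimension $2$. The regular sequence $(m, P(T))$ remains regular after localization, so
\[
R_\mathfrak{n}\;\cong\;\zz[T]_\mathfrak{M}/(m, P(T))
\]
is a complete intersection in a two-dimensional regular local ring, hence Gorenstein (e.g.\ Bruns--Herzog, \emph{Cohen--Macaulay Rings}, Prop.~3.1.19, or Matsumura, \emph{Commutative Ring Theory}, Thm.~21.3). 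Since Gorensteinness is local on $\Spec R$, we conclude that $R$ itself is Gorenstein.

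The only step that requires any thought is the verification of the regular sequence condition, which is immediate once one notices that $P(T)$ is monic; everything else is standard commutative algebra. I do not anticipate a genuine obstacle. A possible alternative route, avoiding localization, is to argue directly that $\zz/m\zz$ is a zero-dimensional Gorenstein ring, that $(\zz/m\zz)[T]$ inherits the Gorenstein property from $\zz/m\zz$, and that quotienting by the monic (hence nonzerodivisor) element $\bar{P}(T)$ preserves Gorensteinness; but the complete-intersection argument above is the cleanest.
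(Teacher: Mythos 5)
Your argument is the same as the paper's: exhibit $R$ as a quotient of the regular ring $\zz[T]$ by the regular sequence $(m, P(T))$ and invoke the fact that complete intersections are Gorenstein (the paper cites Eisenbud, Corollary~21.19, for this). Your localization step simply spells out the reduction to the local case that the cited result already handles, so there is no genuine difference in approach.
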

\begin{proof}
  This follows directly from \cite[Corollary 21.19]{MR1322960} since
  $\zz[T]$ is a regular ring and $m, P(T)$ form a regular sequence.
\end{proof}

\begin{lem}\label{lem:gorenstein-ring-free}
  Let $(R,\m)$ be a local Artinian ring with residue field $k=R/\m$,
  and $M$ a finitely generated $R$-module of length $l_R(M)$. The
  socle of $M$ is defined to be the submodule $M_0:=\{x\in M \mid
  yx=0, \,\forall\, y\in \m\}$, which is the sum of all simple
  submodules of $M$.  Suppose $R$ is Gorenstein, then $l_R(M)\leq
  l_R(R)\dim_k(M_0)$, and the equality holds if and only if $M$ is a
  free $R$-module of rank $\dim_k M_0$.
\end{lem}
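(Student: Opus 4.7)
The plan is to leverage two defining features of a Gorenstein Artinian local ring $(R,\m,k)$: the socle $\mathrm{soc}(R) := \{x \in R : \m x = 0\}$ is one-dimensional over $k$, and $R$ is injective as a module over itself (see e.g.\ \cite[Proposition 21.5]{MR1322960}). These ingredients will allow us to construct a length-preserving embedding of $M$ into a free $R$-module of the correct rank, after which both the inequality and the equality criterion fall out by comparing lengths.

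Set $r := \dim_k M_0$. We first fix a $k$-basis of $M_0$ and send it to a $k$-basis of $\mathrm{soc}(R)^r \subset R^r$, obtaining an injection $\iota_0 : M_0 \hookrightarrow R^r$. Using that $R^r$ is injective, we extend $\iota_0$ to an $R$-linear map $\iota : M \to R^r$. The crucial point is that $M_0$ is an essential submodule of $M$: over a local Artinian ring, any nonzero submodule of a finitely generated module is itself Artinian and contains a simple submodule, which must lie in $M_0$. Since $\iota|_{M_0} = \iota_0$ is injective, $\ker\iota \cap M_0 = 0$ forces $\ker\iota = 0$. This yields $l_R(M) \le l_R(R^r) = r \cdot l_R(R) = l_R(R)\cdot \dim_k(M_0)$, the desired inequality.

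For the equality case, if $l_R(M) = r \cdot l_R(R) = l_R(R^r)$, the length-preserving injection $\iota$ must also be surjective, so $M \cong R^r$ is free of rank $r = \dim_k(M_0)$. Conversely, if $M \cong R^s$ is free, then $M_0 \cong \mathrm{soc}(R)^s \cong k^s$ by the one-dimensionality of $\mathrm{soc}(R)$, giving $\dim_k M_0 = s$ and $l_R(M) = s\cdot l_R(R) = \dim_k(M_0)\cdot l_R(R)$. The main obstacle is not computational but conceptual: we need to identify the Gorenstein hypothesis as supplying both the self-injectivity of $R$ (to extend $\iota_0$) and the one-dimensionality of $\mathrm{soc}(R)$ (to make the rank count match $\dim_k M_0$), which is essentially a packaged form of Matlis duality for zero-dimensional Gorenstein rings.
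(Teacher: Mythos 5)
Your proof is correct, and it takes a genuinely different (in fact dual) route from the paper's. The paper's argument dualizes: it uses that $M \mapsto M^\vee := \Hom_R(M, R)$ is an exact contravariant autoduality on finitely generated modules, shows $M_0^\vee \cong M^\vee/\m M^\vee$ so that $M^\vee$ needs exactly $r = \dim_k M_0$ generators by Nakayama, and then compares lengths along the resulting surjection $R^r \twoheadrightarrow M^\vee$; the equality case gives $M^\vee$ free, hence $M$ free after double-dualizing. You instead work on the primal side: you embed $M_0$ into $\mathrm{soc}(R)^r \subset R^r$, use self-injectivity of $R$ (which is the same underlying Gorenstein input, since exactness of $\Hom_R(-,R)$ is injectivity of $R$) to extend to $\iota : M \to R^r$, and then use essentiality of the socle to force $\iota$ injective, comparing lengths along an injection rather than a surjection. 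Your route avoids the explicit appearance of $M^\vee$ and the identification $M_0^\vee \cong M^\vee/\m M^\vee$, trading them for the observation that the socle is essential in any finite-length module over a local ring -- a standard fact, but one the paper does not need to invoke. Both proofs hinge on the same two Gorenstein ingredients ($\dim_k \mathrm{soc}(R) = 1$ and self-injectivity), so neither is strictly more elementary; yours is arguably more transparent about what the injective hull construction is doing, while the paper's is slightly shorter by outsourcing the essentiality step to the duality formalism.
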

\begin{proof}
  For simplicity, we will write $l(M):=l_R(M)$ for the length of $M$
  if the ring $R$ is clear from context.  The socle of a local
  Gorenstein ring is simple (See \cite[Proposition 21.5]{MR1322960}),
  i.e., it has dimension $1$ over the residue field.  If $M$ is free,
  then $\dim_k M_0=\rk_R M=l(M)/l(R)$.

  Let $\MM$ be the category of all finitely generated $R$-modules.
  Since $R$ is Gorenstein, the functor $M \mapsto M^\vee:=\Hom_R(M,R),
  \forall \, M\in \MM$
  is a \textit{dualizing functor} from $\MM$ to itself. In other
  words,
  \begin{itemize}
  \item it is contravariant, $R$-linear and exact;
  \item  $\forall\, M\in \MM$, $(M^\vee)^\vee$ is canonically isomorphic
    to $M$.
  \end{itemize}
  In particular, the exactness implies that $l(M^\vee)=l(M)$, $\forall
  M\in \MM$.  We also note that $l(M_0)=\dim_k(M_0)$. For simplicity,
  let $r:=\dim_k M_0$.

  By definition, $M_0$ is the maximal submodule of $M$ annihilated by
  $\m$. Dualizing, we see that $M_0^\vee$ is the maximal quotient of
  $M^\vee$ annihilated by $\m$. That is, $M_0^\vee \cong M^\vee/(\m
  M^\vee)$. Therefore, \[\dim_kM^\vee/(\m M^\vee)=\dim_k
  M_0^\vee=l(M_0^\vee)=l(M_0)=\dim_k(M_0)=r.\]  By
  Nakayama's lemma, $M^\vee$ can be generated by $r$ elements. In
  other words, we have an exact sequence
  \[ 0 \to \ker \theta \to R^r\xrightarrow{\theta} M^\vee \to 0.\] 
Therefore,
\begin{equation}
  \label{eq:4}
l(M)=l(M^\vee)= 
  l(R)r -l(\ker\theta)\leq l(R)r.
\end{equation}

  If $l(M)=l(R)r$, then $l(\ker \theta)=0$. Hence $\ker \theta
  =\{0\}$ and $M^\vee \simeq R^r$.  We conclude that $M$ is free as
  well since $M\cong (M^\vee)^\vee\simeq \Hom_R(R^r, R)$. 
\end{proof}

\begin{cor}\label{cor:criterion-free-for-Ap}
  Suppose that $\dim A =r \deg P(T)/2$.  Let $\ell$ be a prime
  distinct from $\Char(K)$, and $\bar{P}(T)=\prod_{i=1}^s
  h_i(T)^{t_i}$ be the factorization of $\bar{P}(T):=P(T)\otimes 1\in
  \ff_\ell[T]=\zz[T]\otimes_\zz \ff_\ell$ into irreducible factors
  over $\ff_\ell$. For each $1\leq i\leq s$, let $W_{\ell, i}:=\{ x\in
  A[\ell]\mid h_i(\phi)x =0\}$.\footnote{A priori, $H(\phi)$ only make
    sense if $H(T)\in \zz[T]$. We may choose $H_i(T)\in \zz[T]$ such
    that its reduction mod $\ell$ is $h_i(T)$. Then for any $x\in
    A[\ell]$, the element $H_i(\phi)x$ does not depends on the choice
    of $H_i(T)$. By an abuse of notation, we will denote this element
    by $h_i(\phi)x$. } Then $A[\ell]$ is a free
  $\ff_\ell[T]/(\bar{P}(T))$-module of rank $r$ if and only if
  $\dim_{\ff_\ell}W_{\ell, i} = r\deg h_i(T)$.
\end{cor}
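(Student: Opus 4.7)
The plan is to decompose the $R$-module $A[\ell]$ via the Chinese Remainder Theorem, identify $W_{\ell,i}$ with the socle of a local factor, and then apply Lemma~\ref{lem:gorenstein-ring-free} term by term. Since the factors $h_i(T)^{t_i}$ are pairwise coprime in $\ff_\ell[T]$, the ring $R:=\ff_\ell[T]/(\bar P(T))$ splits as $R\cong \prod_{i=1}^s R_i$, where $R_i:=\ff_\ell[T]/(h_i(T)^{t_i})$ is a local Artinian ring with maximal ideal $\m_i=(h_i(T))$, residue field $k_i:=\ff_\ell[T]/(h_i(T))$ of $\ff_\ell$-dimension $\deg h_i$, and length $l_{R_i}(R_i)=t_i$. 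Correspondingly $A[\ell]=\bigoplus_{i=1}^s M_i$, with $\dim_{\ff_\ell} M_i=l_{R_i}(M_i)\cdot \deg h_i$.

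The key identification is $W_{\ell,i}=(M_i)_0$, the socle of $M_i$ as an $R_i$-module. For $j\neq i$ the element $h_i(T)$ is a unit in $R_j$ (being coprime to $h_j^{t_j}$), so $h_i(\phi)$ acts invertibly on $M_j$; on $M_i$ the element $h_i(\phi)$ generates $\m_i$. Hence $W_{\ell,i}=\{x\in M_i : \m_i x=0\}=(M_i)_0$, and $\dim_{\ff_\ell} W_{\ell,i}=\deg h_i\cdot \dim_{k_i}(M_i)_0$. Each $R_i$ is Gorenstein (by the preceding lemma, or directly since $\ff_\ell[T]$ is regular and $h_i(T)^{t_i}$ is a nonzerodivisor), so Lemma~\ref{lem:gorenstein-ring-free} yields $l_{R_i}(M_i)\leq t_i\cdot \dim_{k_i}(M_i)_0$, with equality if and only if $M_i$ is a free $R_i$-module of rank $\dim_{k_i}(M_i)_0$.

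The last step is a bookkeeping argument. The hypothesis on $\dim A$ yields
\[ \sum_{i=1}^s l_{R_i}(M_i)\deg h_i=\dim_{\ff_\ell}A[\ell]=2\dim A=r\sum_{i=1}^s t_i\deg h_i. \]
For the backward direction, if $\dim_{\ff_\ell}W_{\ell,i}=r\deg h_i$ for every $i$, then $\dim_{k_i}(M_i)_0=r$, so each $l_{R_i}(M_i)\leq rt_i$; the displayed equation forces each of these bounds to be an equality, so every $M_i$ is $R_i$-free of rank $r$, whence $A[\ell]$ is $R$-free of rank $r$. For the forward direction, if $A[\ell]\cong R^r$ then $M_i\cong R_i^r$, and since the socle of a local Gorenstein Artinian ring is simple, $(M_i)_0\cong k_i^r$, giving $\dim_{\ff_\ell} W_{\ell,i}=r\deg h_i$.

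The only mildly subtle point I expect is the identification of $W_{\ell,i}$ with $(M_i)_0$, which relies on choosing lifts $H_i(T)\in \zz[T]$ of $h_i(T)$ (as in the footnote of the statement) and verifying that the action of $h_i(\phi)$ on $A[\ell]$ factors through $R$ and so respects the idempotent projections onto the $R_i$; everything else is a clean combination of length-counting and the Gorenstein machinery already set up.
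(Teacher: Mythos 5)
Your proposal is correct and follows essentially the same route as the paper's proof: CRT decomposition of $\ff_\ell[T]/(\bar P(T))$ into local Artinian factors $R_i$, identification of $W_{\ell,i}$ with the socle of the corresponding summand $M_i$ of $A[\ell]$, application of Lemma~\ref{lem:gorenstein-ring-free}, and a length-counting argument using $\dim_{\ff_\ell}A[\ell]=2\dim A=r\deg P(T)$. The only difference is cosmetic: you spell out why $h_i(\phi)$ acts invertibly on $M_j$ for $j\neq i$ (so that $W_{\ell,i}\subseteq M_i$), a point the paper treats as immediate.
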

\begin{proof}
  By the Chinese Reminder Theorem, we may decompose the Artinian ring
  $\ff_\ell[T]/(\bar{P}(T))$ into a direct sum of local Artinian rings:
  \[R:= \ff_\ell[T]/(\bar{P}(T))\cong \bigoplus_{i=1}^s
  \ff_\ell[T]/(h_i(T)^{t_i}).\] Clearly, $\m_i=(h_i(T))$ is the unique
  maximal ideal in $R_i:=\ff_\ell[T]/(h_i(T)^{t_i})$ and its residue
  field is $k_i:=R_i/\m_i\cong \ff_{\ell^{d_i}}$ with $d_i:=\deg
  h_i(T)$. Correspondingly, we have a direct sum decomposition
\[ M:= A[\ell]= \oplus_{i=1}^s M_i,\]
where each $M_i$ is an $R_i$-module. By definition, $W_{\ell, i}$ is
the socle of $M_i$. 

If $M$ is a free $R$-module of rank $r$, then each $M_i$ is a free
$R_i$-module of rank $r$. By Lemma~\ref{lem:gorenstein-ring-free},
$\dim_{\ff_\ell}W_{\ell, i}=[k_i:\ff_\ell]\dim_{k_i}W_{\ell, i} =d_ir$
for all $1\leq i \leq s$.

Now suppose that $\dim_{k_i} W_{\ell, i}=r$ for all $i$. Then by
Lemm~\ref{lem:gorenstein-ring-free}, $ l_{R_i}(M_i)\leq
rl_{R_i}(R_i). $ Hence $M_i\leq \abs{R_i}^r = \ell^{rd_it_i}$.
On the other hand, 
\[ \ell^{r\deg P(T)}=\abs{A[\ell]}=\prod_{i=1}^s \abs{M_i}\leq
\ell^{r\sum_i d_it_i}=\ell^{r\deg P(T)}.\]
So we must have equality at all places. In particular, $l_{R_i}(M_i)=
rl_{R_i}(R_i)$. By Lemm~\ref{lem:gorenstein-ring-free} again, $M_i$ is a
free $R_i$-module of rank $r$ for all $1\leq i\leq s$. Hence $M$ is a
free $R$-module of rank $r$. 
\end{proof}
\begin{lem}\label{lem:free-tate-module-over-ring-of-integers}
  Let $\iota: \OO\hookrightarrow \End(A)$ be an embedding of the ring
  of integers $\OO\subset L$ of a number field $L$ into
  $\End(A)$. Then for any prime $\ell$ distinct from $\Char(K)$,
  $T_\ell A$ is a free $\OO \otimes_\zz \zz_\ell$-module of rank
  $2\dim A/[L:\qq]$.
\end{lem}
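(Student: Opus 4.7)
The plan is to exploit the decomposition of $\OO \otimes_\zz \zz_\ell$ into a product of complete discrete valuation rings indexed by the primes of $\OO$ above $\ell$, thereby reducing the freeness question to a local rank computation on each factor.

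First I would decompose
\[
\OO \otimes_\zz \zz_\ell \;\cong\; \prod_{\lambda \mid \ell} \OO_\lambda,
\]
where $\lambda$ runs over the primes of $\OO$ above $\ell$ and $\OO_\lambda$ is the $\lambda$-adic completion. The orthogonal idempotents realizing this decomposition yield $T_\ell A = \bigoplus_{\lambda \mid \ell} T_\lambda A$, with each $T_\lambda A$ a finitely generated $\OO_\lambda$-module. Since a power of the uniformizer $\pi_\lambda$ agrees with a power of $\ell$ up to a unit in $\OO_\lambda$, any $\OO_\lambda$-torsion in $T_\lambda A$ would be $\zz_\ell$-torsion in $T_\ell A$, contradicting the $\zz_\ell$-freeness of the Tate module. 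Hence each $T_\lambda A$ is torsion-free over the PID $\OO_\lambda$, therefore free.

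What remains is to identify the rank as $r := 2 \dim A / [L : \qq]$ on each factor; equivalently, to show that $V_\ell A := T_\ell A \otimes_{\zz_\ell} \qq_\ell$ is a free $L \otimes_\qq \qq_\ell$-module of rank $r$. For this I would pick a primitive element $\alpha \in \OO$ with $L = \qq(\alpha)$, let $P(T) \in \zz[T]$ denote its minimal polynomial, and invoke the classical fact (see Mumford, \textit{Abelian Varieties}, \S 19, Theorem 3) that the characteristic polynomial of $\alpha$ acting on $V_\ell A$ equals $P(T)^r$. The factorization $P(T) = \prod_{\lambda \mid \ell} P_\lambda(T)$ over $\qq_\ell$ reproduces the decomposition $L \otimes \qq_\ell = \prod_\lambda L_\lambda$ with $L_\lambda \cong \qq_\ell[T]/(P_\lambda(T))$; the generalized $P_\lambda$-eigenspace of $\alpha$ on $V_\ell A$ coincides with $V_\lambda A$ and has $\qq_\ell$-dimension $r \deg P_\lambda = r [L_\lambda : \qq_\ell]$, giving $\dim_{L_\lambda} V_\lambda A = r$ as required.

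The main obstacle is precisely the rank identity in the last step: none of the preceding lemmas addresses it directly, since those all concern a single endomorphism whose minimal polynomial generates the full acting ring, whereas $\OO$ need not be monogenic (and even when $\OO = \zz[\alpha]$, Lemma~\ref{lem:free-tate-module-p-torsion} only gives freeness over $\zz_\ell[\alpha]$, which coincides with $\OO \otimes \zz_\ell$ only at primes $\ell$ coprime to the conductor of $\zz[\alpha]$ in $\OO$). Invoking the standard characteristic-polynomial formula for endomorphisms of abelian varieties bypasses this obstacle cleanly.
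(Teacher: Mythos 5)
Your proof is correct and takes essentially the same route as the paper: both deduce freeness of $T_\ell A$ from (i) the $\qq$-rationality results of Mumford, \textit{Abelian Varieties}, \S 19 applied to $V_\ell A$, and (ii) the fact that $\OO\otimes_\zz\zz_\ell$ is a product of complete discrete valuation rings, so that torsion-freeness of the local pieces gives freeness. The only difference worth flagging is that the paper invokes the rationality of $\Tr(\iota(a);V_\ell A)$ for $a\in L$, whereas you invoke the characteristic polynomial; note that Mumford's theorem gives that this polynomial lies in $\qq[T]$, not that it literally equals $P(T)^r$ — the latter is the short deduction (its roots are roots of the $\qq$-irreducible $P(T)$, so it is a power of $P(T)$, and comparing degrees gives the exponent $r$) that you should state rather than attribute to the reference.
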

\begin{proof}
  This is a well-known fact. By \cite[Theorem 4, p. 180]{MR2514037},
  we have ${\rm Tr}(\iota(a); V_\ell(A))\in \qq$ for all $a\in L$, where
  $V_\ell(A)=T_\ell(A)\otimes_\zz \qq_\ell$. It follows that $V_\ell(A)$
  is a free $L\otimes_\qq \qq_\ell$-module. Since  $\OO \otimes_\zz \zz_\ell$
  is a product of complete discrete valuation rings, the freeness of
  $T_\ell(A)$ follows. 
\end{proof}

\begin{thm}\label{thm:equivalence-tate-free-kernel-connected}
  Suppose that $\dim A= r\deg P(T)/2$, and $\Disc(P(T))$ is coprime to
  $\Char(K)$. Consider the following statements.
  \begin{enumerate}
  \item[(\ref{thm:equivalence-tate-free-kernel-connected}.a)] $ A[p]$
    is a free $\ff_p[T]/(\bar{P}(T))$-module of rank $r$ for any prime
    $p\mid \Disc(P(T))$.
  \item[(\ref{thm:equivalence-tate-free-kernel-connected}.b)] $\ker
    G(\phi)$ is an abelian subvariety of $A$ of dimension $r\deg
    G(T)/2$ for all $G(T)\mid P(T)$.
  \item[(\ref{thm:equivalence-tate-free-kernel-connected}.c)] $\ker
    F(\phi)$ is an abelian subvariety of $A$ of dimension $r\deg
    F(T)/2$ for all irreducible
    $F(T)\mid P(T)$.
  \end{enumerate}
  Then
  {\rm{(\ref{thm:equivalence-tate-free-kernel-connected}.a)}}$\Rightarrow$
  {\rm{(\ref{thm:equivalence-tate-free-kernel-connected}.b)}}
  $\Rightarrow$
  {\rm{(\ref{thm:equivalence-tate-free-kernel-connected}.c)}}. If
  {\rm{(\ref{thm:equivalence-tate-free-kernel-connected}.b)}} holds,
  $\ker G(\phi)= \sum \ker F(\phi)$, where the sum is over all
  irreducible factors $F(T)$ of $G(T)$.  We further assume that
  $\zz[T]/(F(T))$ is a normal integral domain for all irreducible
  factors $F(T)\mid P(T)$. Then
  {\rm{(\ref{thm:equivalence-tate-free-kernel-connected}.a')}}$\Leftrightarrow$
  {\rm{(\ref{thm:equivalence-tate-free-kernel-connected}.b)}}
  $\Leftrightarrow$
  {\rm{(\ref{thm:equivalence-tate-free-kernel-connected}.c)}}, where
  {\rm{(\ref{thm:equivalence-tate-free-kernel-connected}.a')}} is the
  following variant of
  {\rm{(\ref{thm:equivalence-tate-free-kernel-connected}.a)}}:
  \begin{enumerate}
  \item[(\ref{thm:equivalence-tate-free-kernel-connected}.a')] $
    A[\ell]$ is a free $\ff_\ell[T]/(\bar{P}(T))$-module of rank $r$
    for any prime $\ell\neq \Char(K)$.
  \end{enumerate}
\end{thm}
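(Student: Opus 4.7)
The plan is to show (a) $\Rightarrow$ (b) $\Rightarrow$ (c) unconditionally, and, under the normality hypothesis, to close the loop via (c) $\Rightarrow$ (a'); since (a') $\Rightarrow$ (a) is tautological, this subsumes (a') $\Rightarrow$ (b).

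For (a) $\Rightarrow$ (b): given $G(T)\mid P(T)$, set $H(T) := P(T)/G(T)$. Since $\Res(G,H)$ divides $\Disc(P)$ (as recorded in \S\ref{subsec:resultant-and-generator-of-intersection}) and $\Disc(P)$ is coprime to $\Char(K)$, Lemma~\ref{lem:reduced-kernel} yields that $\ker G(\phi)$ and $\ker H(\phi)$ are both reduced and $\dim\ker G(\phi)+\dim\ker H(\phi)=\dim A$. I would pin down the individual dimensions using hypothesis (a): for any prime $p\mid\Disc(P)$, the annihilator of $\bar G$ in the rank-$r$ free module $(\ff_p[T]/(\bar P))^r$ has $\ff_p$-dimension $r\deg G$, so $|\ker G(\phi)[p]|=p^{r\deg G}$. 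Writing $d:=\dim\ker G(\phi)$, the general lower bound $|\ker G(\phi)[p]|\geq p^{2d}$ from Corollary~\ref{cor:criterion-for-kernel-connected} forces $2d\leq r\deg G$; the symmetric bound $2d'\leq r\deg H$ summed against $2(d+d')=2\dim A=r\deg P$ pushes both inequalities to equalities. Thus $d=r\deg G/2$ and $|\ker G(\phi)[p]|=p^{2d}$ for every $p\mid\Disc(P)$, a fortiori for every $p\mid\Res(G,H)$, so Corollary~\ref{cor:criterion-for-kernel-connected} delivers the connectedness of $\ker G(\phi)$.

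The implication (b) $\Rightarrow$ (c) is immediate. For the sum formula $\ker G(\phi)=\sum_{F\mid G}\ker F(\phi)$ under (b): since $\Disc(P)\neq 0$ the polynomial $P$ is squarefree, so distinct irreducible factors $F_i,F_j$ of $G$ are coprime in $\zz[T]$ and B\'ezout gives $\ker F_i(\phi)\cap\ker F_j(\phi)\subseteq A[\Res(F_i,F_j)]$, a finite group. The sum $\sum_{F\mid G}\ker F(\phi)$ of abelian subvarieties of $\ker G(\phi)$ therefore has dimension $\sum r\deg F_j/2=r\deg G/2=\dim\ker G(\phi)$, and being a connected subvariety of equal dimension inside $\ker G(\phi)$ it coincides with it.

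For (c) $\Rightarrow$ (a') under normality: on $\ker F(\phi)$ the restriction of $\phi$ has irreducible minimal polynomial $F$, yielding an embedding $\zz[T]/(F)\hookrightarrow\End(\ker F(\phi))$ which by the normality hypothesis is the full ring of integers of $\qq[T]/(F)$. Lemma~\ref{lem:free-tate-module-over-ring-of-integers} then makes $T_\ell\ker F(\phi)$ free of rank $r$ over $\zz_\ell[T]/(F)$ for every $\ell\neq\Char(K)$, so reducing modulo $\ell$ gives $\ker F(\phi)[\ell]\cong(\ff_\ell[T]/(\bar F))^r$. To apply Corollary~\ref{cor:criterion-free-for-Ap}, I would verify $\dim_{\ff_\ell}W_{\ell,i}=r\deg h_i$ for each irreducible factor $h_i$ of $\bar P\in\ff_\ell[T]$: pick any irreducible $F\mid P$ with $h_i\mid\bar F$ and write $\bar F=h_i^{e}q$ with $q$ coprime to $h_i$; the implication $h_i(\phi)x=0\Rightarrow h_i(\phi)^e x=0\Rightarrow\bar F(\phi)x=0$ shows $W_{\ell,i}\subseteq\ker F(\phi)[\ell]$, and inside the free rank-$r$ module $(\ff_\ell[T]/(\bar F))^r$ the $h_i$-annihilator has $\ff_\ell$-dimension $r\deg h_i$ by the CRT decomposition $\ff_\ell[T]/(\bar F)\cong\ff_\ell[T]/(h_i^e)\times\ff_\ell[T]/(q)$. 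The main obstacle is precisely this last step, where the normality hypothesis must be used in full strength (via Lemma~\ref{lem:free-tate-module-over-ring-of-integers}) to translate the global freeness of $T_\ell\ker F(\phi)$ into the socle-dimension computations demanded by Corollary~\ref{cor:criterion-free-for-Ap}, correctly handling the possibly ramified local factors $h_i^e$ of $\bar F$.
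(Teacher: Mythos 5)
Your argument reproduces the paper's proof essentially step for step: you use the divisibility $\Res(G,P/G)\mid\Disc P$ together with Lemma~\ref{lem:reduced-kernel} and Corollary~\ref{cor:criterion-for-kernel-connected} to pin down the dimension and connectedness of $\ker G(\phi)$ for (a)$\Rightarrow$(b), the same coprimality/finite-intersection counting for the sum formula, and for (c)$\Rightarrow$(a') the same reduction $W_{\ell,i}\subseteq\ker F(\phi)[\ell]$ followed by Lemma~\ref{lem:free-tate-module-over-ring-of-integers} (via normality), Lemma~\ref{lem:free-tate-module-p-torsion}, and the socle count of Corollary~\ref{cor:criterion-free-for-Ap}. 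The proposal is correct and matches the paper's route.
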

\begin{proof}
  Clearly {\rm{(\ref{thm:equivalence-tate-free-kernel-connected}.b)}}
  $\Rightarrow$
  {\rm{(\ref{thm:equivalence-tate-free-kernel-connected}.c)}}.  We
  prove that
  {\rm{(\ref{thm:equivalence-tate-free-kernel-connected}.a)}}$\Rightarrow$
  {\rm{(\ref{thm:equivalence-tate-free-kernel-connected}.b)}}. Let
  $G'(T)=P(T)/G(T)$, then $\Res(G(T), G'(T))\mid \Disc P(T)$ by
  Subsection~\ref{subsec:resultant-and-generator-of-intersection}. In
  particular, $\Res(G(T), G'(T))$ is coprime to $\Char(K)$. Let
  $\beta:=G(\phi), \gamma:=G'(\phi)\in \End(A)$. Clearly,
  $(\ker\beta)[p]=\{ x\in A[p]\mid \beta x=0\}$.  If $A[p]$ is a free
  $\ff_p[T]/(P(T))$-module of rank $r$, then $\abs{(\ker\beta)[p]}=
  p^{r\deg G(T)}$.  By Lemma~\ref{cor:criterion-for-kernel-connected},
  $\dim \ker \beta\leq r\deg G(T)/2$. Similarly, $\dim \ker \gamma\leq
  r\deg G'(T)/2$.  However, by Lemma~\ref{lem:reduced-kernel},
  $\dim \ker \beta +\dim \ker \gamma=\dim A$. So we have 
\[  \begin{split}
   \dim A&= r\deg P(T)/2= r\deg G(T)/2 + r\deg G'(T)/2 \\&\geq \dim \ker
\beta +\dim \ker\gamma = \dim A. 
  \end{split}
 \]
Therefore, $\dim \ker\beta= r\deg G(T)/2$. We conclude that $\ker
\beta$ is connected by Lemma~\ref{cor:criterion-for-kernel-connected}
again. This proves that   {\rm{(\ref{thm:equivalence-tate-free-kernel-connected}.a)}}$\Rightarrow$
  {\rm{(\ref{thm:equivalence-tate-free-kernel-connected}.b)}}.

  If $G_1(T)$ and $G_2(T)$ are coprime divisors of $P(T)$, then $(\ker
  G_1(\phi)) \cap (\ker G_2(\phi))$ is a finite \'etale group scheme
  over $K$, so
\[\dim(\ker G_1(\phi)+\ker G_2(\phi))=\dim \ker G_1(\phi)+\dim
\ker G_2(\phi). \] Suppose that $G(T)=\prod_{i=1}^t F_i(T)$ with each
$F_i(T)$ irreducible and pairwise distinct. By induction, $\dim \ker
G(\phi)= \dim (\sum_{i=1}^t \ker F_i(\phi))$. Clearly
\begin{equation}
  \label{eq:12}
\ker G(\phi)\supseteq \sum_{i=1}^t \ker F_i(\phi).  
\end{equation}
 Suppose that
(\ref{thm:equivalence-tate-free-kernel-connected}.b) holds. Then both
sides of (\ref{eq:12}) are abelian varieties of the same dimension. So
they must be the same. 

Suppose that $\zz[T]/(F(T))$ is integrally closed for all irreducible
factors $F(T)$ of $P(T)$. To show the statements are equivalent, it is
enough to prove
{\rm{(\ref{thm:equivalence-tate-free-kernel-connected}.c)}}$\Rightarrow$
{\rm{(\ref{thm:equivalence-tate-free-kernel-connected}.a')}}. Suppose
that $\bar{P}(T)= \prod_{i=1}^s h_i(T)^{t_i}$ is the prime
factorization of $\bar{P}(T)$ over $\ff_\ell$.  Let $W_{\ell, i}=\{
x\in A[\ell]\mid h_i(\phi)x=0\}$. By
Corollary~\ref{cor:criterion-free-for-Ap}, $A[\ell]$ is a free
$\ff_\ell[T]/(\bar{P}(T))$-module of rank $r$ if we can prove that
$\dim_{\ff_\ell}W_{\ell, i}= r\deg h_i(T)$ for all $1\leq i \leq s$.

For each fixed $h_i(T)$, there exists an irreducible factor $F(T)$ of
$P(T)$ such that $h_i(T)\mid \bar{F}(T)$. Therefore,
\[W_{\ell, i}=\{ x\in A[\ell]\mid h_i(\phi)x=0\}= \{ x\in \ker
F(\phi)[\ell]\mid h_i(\phi)x=0\}.\] Suppose that $\ker F(\phi)$ is
an abelian subvariety of $A$ of dimension $r\deg F(T)/2$.  There is an
embedding $\zz[T]/(F(T))\hookrightarrow \End(\ker F(\phi))$ given by
$T\mapsto \phi\mid_{\ker F(\phi)}$. Since $\zz[T]/(F(T))$ is a
normal integral domain, it follows from
Lemma~\ref{lem:free-tate-module-p-torsion} and
Lemma~\ref{lem:free-tate-module-over-ring-of-integers} that $(\ker
F(\phi))[\ell]$ is a free $\ff_\ell[T]/(\bar{F}(T))$-module of rank $r$. By
Corollary~\ref{cor:criterion-free-for-Ap}, the $\ff_\ell$-vector space
$\{ x\in \ker F(\phi)[\ell]\mid h_i(\phi)x=0\}$ has dimension $r\deg
h_i(T)$. We obtain the desired result. 
\end{proof}
\section{superelliptic Jacobians}

In this section, we prove the theorems and their corollaries stated in
the introduction. Certain simple arithmetic results  are
postponed to Section~4. We keep the notations and assumptions of
Section~\ref{sec:introduction}. Recall that $K$ is an algebraically
closed field, $C_{f,N}$ is the smooth projective curve over $K$
defined by $Y^N=f(X)=\prod_{i=1}^n(X-\alpha_i)^{e_i}$ with $f(x)$
satisfying the conditions in (\ref{eq:assumption-on-f}), and
$J_N:=\Jac(C_{f,N})$ is the Jacobian of $C_{f,N}$.  There is a natural
action of $\rho_J: \bmu_N\to \Aut(J_N)$ of $\bmu_N\subset K^\times$ on
$J_N$, and $\rho_J(\bmu_N)=\dangle{\delta_N}$.

\begin{sect}
  The assumptions in (\ref{eq:assumption-on-f}) guarantee that there
  is exactly one point in $C_{f,N}(K)$ corresponding to the point
  $(\alpha_i,0)$ on the affine curve $Y^N=f(X)$, and moreover, there
  is a unique point (denoted by $\infty$) in $C_{f,N}(K)$ that lies
  above the point at infinity on $\pp^1(K)$ for the map $\eta:
  C_{f,N}\to \pp^1$. Clearly, $\bmu_N$ fixes the following set of
  points
\begin{equation}
  \label{eq:fixed-points-on-curve}
  \FV(C_{f,N}):= \{ \QQ_1=(\alpha_1, 0), \cdots, \QQ_n=(\alpha_n, 0),
\infty\}\subseteq C_{f,N}(K),
\end{equation}
and it acts freely outside $\FV(C_{f,N})$. Therefore $\eta:C_{f,N}\to
\pp^1$ is totally ramified at each point of $\FV(C_{f,N})$ with
ramification index $N$, and unramified everywhere else. All the
ramifications are tame since the characteristic of $K$ does not divide
$N$.  By the Hurwitz formula \cite[Corollary IV.2.4]{MR0463157}, the
genus of $C_{f,N}$ is (cf. \cite{MR1107394} for the case $K=\cc$)
\begin{equation}
  \label{eq:genus}
   g(C_{f,N})=\frac{(N-1)(n-1)}{2}.
\end{equation}
\end{sect}

\begin{sect}\label{subsec:description-of-the-obvious-part}
  A natural question is to describe the group of all fixed points of
  $\bmu_N$ on $J_N$. Let us denote it by
\[ \FV_N:=(J_N)^{\bmu_N}=\{ x\in J_N(K)\mid \delta_N  x=x\}.\]
It contains an obvious subgroup consisting of the linear equivalence classes of divisors of degree
zero supported on $\FV(C_{f,N})$:
\[\GV_N:=\left\{[\DD]\in \Pic^0(C_{f,N})=J_N(K)\;\middle |\; \DD=b\infty+\sum_{i=1}^n a_i\QQ_i, \quad
  \deg \DD =b+\sum_{i=1}^na_i=0\right\}. \]

We will describe the group structure of $\GV_N$. Given a rational
function $g\in K(C_{f,N})$ on $C_{f,N}$, let $\Div(g)$ be its
divisor. Then
\begin{align*}
  \Div (Y) &= \sum_{i=1}^n e_i \QQ_i - \deg(f)\infty,\\
  \Div (X-\alpha_i)&= N \QQ_i- N\infty. 
\end{align*}

Since $\gcd(\deg(f),N)=1$, we may find $a, b \in \zz$ such that
$a\deg(f)+bN=1$. Then 
\[ \Div (Y^a(X-\alpha_i)^b)= a\sum_{j=1}^n e_j\QQ_j + bN\QQ_i-
(a\deg(f)+bN)\infty=a\sum_{j=1}^n e_j\QQ_j + bN\QQ_i- \infty. \]
Therefore, any divisor of degree zero supported on $\FV(C_{f,N})$ is linear
equivalent to one supported on the set 
\[ \RV:=\{ \QQ_1, \cdots, \QQ_n\}.\] By \cite[Lemma 4.1]{xue-yu}, a
divisor of degree zero of the form $\DD=\sum_{i=1}^n a_i \QQ_i$ is
linear equivalent to zero if and only if there exists $c\in \zz$ such
that $a_i\equiv ce_i \pmod{N}$ for all $1\leq i\leq n$. For this
$c\in\zz$, we have $0=\sum_{i=1}^n a_i \equiv c\sum_{i=1}^n e_i
\pmod{N}$. Since $\deg f(x)=\sum_{i=1}^n e_i$ is coprime to $N$,
$c\equiv 0 \pmod{N}$. In other words, $\DD$ is linear equivalent to
zero if and only if $a_i\equiv 0 \pmod{N}$.

Let $M_\RV$ be the free $(\zmod{N})$-module of rank $n$ generated by
elements of $\RV$, and 
\begin{gather*}
  M_\RV^0:=\left \{ m\in M_\RV\;\middle |\;  m=\sum_{i=1}^n a_i \QQ_i,  \;
    a_i\in \zmod{N}, \;
  \sum_{i=1}^na_i=0.\right\},\\
\EE_0:=\sum_{i=1}^n e_i\QQ_i\in M_\RV. 
\end{gather*}
Then $M_\RV= M_\RV^0\oplus (\zmod{N}) \EE_0$ and $
M_\RV^0\cong (\zmod{N})^{n-1}$. We have a canonical isomorphism
\begin{equation}
  \label{eq:1}
  \GV_N\cong M_\RV^0\cong M_\RV/((\zmod{N})\EE_0).
\end{equation}
Our first goal in this section is to show that $\FV_N=\GV_N$. 
\end{sect}

\begin{sect}\label{subsec:calculation-of-character}
  We refer to \cite[Section VI.2]{Serre_local} for the notation
  $\mathbbm{a}_\QQ$ below. It is the character of the Artin
  representation of $\bmu_N$ at $\QQ \in C_{f,N}(K)$ which encodes the
  ramification information at each point $\QQ$ for the map $\eta:
  C_{f,N}\to \pp^1=C_{f,N}/\bmu_N$. If $\eta$ is unramified at $\QQ$,
  then $\mathbbm{a}_\QQ(\xi)=0$ for all $\xi\in \bmu_N$. If $\eta$ is
  totally ramified at $\QQ$, $\mathbbm{a}_\QQ$ may be defined in the
  following way (Combining \cite[Lemma III.6.3]{Serre_local} and
  \cite[Section IV.1]{Serre_local}). Let $\pi_\QQ\in
  K(C_{f,N})^\times$ be a local parameter at $\QQ$, and
  $\mathrm{val}_\QQ: K(C_{f,N})^\times \to \zz$ be the valuation of
  $K(C_{f,N})$ associated to $\QQ$. Then $\forall\, \xi\in \bmu_N$,
\[ \mathbbm{a}_\QQ(\xi)= -\mathrm{val}_\QQ\left(\rho_C(\xi)\pi_\QQ-\pi_\QQ\right)\quad
\text{ if } \xi\neq 1, \qquad \mathbbm{a}_\QQ(1)=-\sum_{\xi\in \bmu_N,
  \xi\neq 1} \mathbbm{a}_\QQ(\xi).\]
For all $\QQ'\in \pp^1(K)$, $\mathbbm{a}_{\QQ'}$ is defined to be
$\sum_{\QQ\mapsto \QQ'} \mathbbm{a}_\QQ$. 

Let us fixed a prime $\ell\neq \Char K$.  Since $\eta$ is totally and
tamely ramified at each point $\QQ\in \FV(C_{f,N})$, we
have \[\mathbbm{a}_{\eta(\QQ)}=\mathbbm{a}_\QQ=
\mathbbm{r}_{\bmu_N}-\mathbbm{1}_{\bmu_N}=\mathbbm{u}_{\bmu_N},\]
where $\mathbbm{r}_{\bmu_N}, \mathbbm{1}_{\bmu_N},
\mathbbm{u}_{\bmu_N}:\bmu_N\to \qq_\ell$ are the characters of the
regular representation, the 1-dimensional trivial representation, and
the augmentation representation of $\bmu_N$ respectively. More
precisely, $\mathbbm{1}_{\bmu_N}(\xi)=1, \forall \,\xi\in \bmu_N$;
$\mathbbm{r}_{\bmu_N}(1)=N$, and $\mathbbm{r}_{\bmu_N}(\xi)=0$ for all
$\xi\neq 1$. Hence
  \[\forall\, \xi\in \bmu_N, \qquad \mathbbm{u}_{\bmu_N}(\xi)=
\begin{cases}
  N-1   &\qquad \text{if } \xi=1,\\
  -1   &\qquad \text{if } \xi\neq 1.\\
\end{cases}
\]
Let $\mathbbm{h}_1: \bmu_N\to \qq_\ell$ be the character of the
representation of $\bmu_N$ defined by $T_\ell J_N$.  By \cite[Section
VI.4]{Serre_local}, we have
\[
\begin{split}
  \mathbbm{h}_1&= \sum_{\QQ\in \FV(C_{f,N})} \mathbbm{a}_{\eta(\QQ)}
  +2\cdot \mathbbm{1}_{\bmu_N} -
  E(\pp^1)\cdot \mathbbm{r}_{\bmu_N}.\\
  &=
  (n-1)(\mathbbm{r}_{\bmu_N}-\mathbbm{1}_{\bmu_N})=(n-1)\mathbbm{u}_{\bmu_N}.
\end{split}\]
Here $E(\pp^1)=2$ is the Euler characteristic of $\pp^1$. 
\end{sect}

\begin{sect}\label{subsec:characteristic-poly-of-deltaN}
  Since $\rho_J(\bmu_N)$ is generated by $\delta_N$, $\FV_N=(J_N)^{\bmu_N}=
  \ker(1-\delta_N)$, so
  \[\abs{\FV_N}=\abs{\ker(1-\delta_N)}=\deg(1-\delta_N).\]
  By \cite[Theorem 19.4]{MR2514037}, $\deg (1-\delta_N)= \det
  T_\ell(1-\delta_N)$. We may choose the prime $\ell$ such that $\ell
  \equiv 1 \pmod{N}$. Then $\ell$ splits completely in $\zz[\zeta_N]$,
  and $\mathbbm{u}_{\bmu_N}= \sum_{\chi\neq 1} \chi$, where the sum is
  over all nontrivial characters $\chi: \bmu_N\to \qq_\ell^\times$.
  It follows from Subsection~\ref{subsec:calculation-of-character}
  that the characteristic polynomial of $T_\ell(\delta_N)$ is
  \[ \det (T-T_\ell(\delta_N))=
  \left(\prod_{i=1}^{N-1}(T-(\zeta_N)^i)\right)^{n-1}= P_N(T)^{n-1},\]
  where $P_N(T)$ is given by (\ref{eq:def-of-PN}), and the minimal
  polynomial of $T_\ell(\delta_N)$ is $P_N(T)$.  Since the natural map
  $\End(J_N)\otimes_\zz \zz_\ell\to \End_{\zz_{\ell}}(T_\ell J_N)$ is
  an embedding \cite[Theorem 19.3]{MR2514037}, the minimal polynomial
  of $\delta_N$ is equal to $P_N(T)$.  We have
  \[\deg(1-\delta_N)=P_N(1)^{n-1}=N^{n-1}.\] 
  Recall that $\FV_N$ contains the subgroup $\GV_N\simeq
  (\zmod{N})^{n-1}$ by (\ref{eq:1}). They must coincide by comparing
  the cardinality. We have proven the following theorem:
\end{sect}
\begin{thm}\label{thm:invariant-theorem}
We have  $\FV_N=\GV_N\simeq (\zmod{N})^{n-1}$.
\end{thm}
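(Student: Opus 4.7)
The plan is to exploit the inclusion $\GV_N\subseteq \FV_N$ already established in Subsection~\ref{subsec:description-of-the-obvious-part}, together with the isomorphism $\GV_N\cong (\zmod{N})^{n-1}$ of (\ref{eq:1}), and reduce the theorem to a cardinality comparison: if I can show $\abs{\FV_N}=N^{n-1}$, then since $\abs{\GV_N}=N^{n-1}$ and $\GV_N\subseteq \FV_N$, equality is forced.

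To compute $\abs{\FV_N}$, I would first observe that $\bmu_N$ acts through its generator $\delta_N=\rho_J(\xi_N)$, so $\FV_N=\ker(1-\delta_N)\subseteq J_N(K)$. Since $\Char(K)\nmid N$ and $1$ is not a root of $P_N(T)$ (indeed $P_N(1)=N$ is coprime to $\Char(K)$), the endomorphism $1-\delta_N$ is an isogeny and $\abs{\FV_N}=\deg(1-\delta_N)$. By the standard identification $\deg(\psi)=\det T_\ell(\psi)$ for $\psi\in\End(J_N)$ and $\ell\neq \Char(K)$, it suffices to pin down the characteristic polynomial of $T_\ell(\delta_N)$ acting on $T_\ell J_N$ and then evaluate at $T=1$.

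This is where the ramification-theoretic character computation carried out in Subsection~\ref{subsec:calculation-of-character} enters. The key inputs are that the cover $\eta\colon C_{f,N}\to \pp^1$ is totally and tamely ramified at exactly the $n+1$ points in $\FV(C_{f,N})$, so each Artin character equals the augmentation character $\mathbbm{u}_{\bmu_N}$, and the formula from \cite[Section VI.4]{Serre_local} gives the $\bmu_N$-character on $T_\ell J_N$ as $(n-1)\mathbbm{u}_{\bmu_N}$. Choosing $\ell\equiv 1\pmod{N}$ (which is allowed by Dirichlet's theorem) makes all characters of $\bmu_N$ realized over $\zz_\ell$, so that $\mathbbm{u}_{\bmu_N}=\sum_{\chi\neq 1}\chi$ and hence
\[
\det(T\cdot\Id - T_\ell\delta_N)=\Bigl(\prod_{i=1}^{N-1}(T-\zeta_N^i)\Bigr)^{n-1}=P_N(T)^{n-1}.
\]
Evaluating at $T=1$ yields $\deg(1-\delta_N)=P_N(1)^{n-1}=N^{n-1}$.

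The only real obstacle is the ramification/character calculation itself, which depends on correctly identifying the Artin characters and invoking the Chevalley--Weil style formula for the $\ell$-adic Tate character; this is classical but must be handled carefully to get the exact multiplicity $n-1$. Once that is in hand, the rest is a clean numerical match: $\abs{\FV_N}=N^{n-1}=\abs{\GV_N}$ combined with $\GV_N\subseteq \FV_N$ forces $\FV_N=\GV_N\simeq (\zmod{N})^{n-1}$.
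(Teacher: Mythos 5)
Your proposal is correct and follows essentially the same route the paper takes in Subsection~\ref{subsec:characteristic-poly-of-deltaN}: reduce to counting $\abs{\ker(1-\delta_N)}$, compute it as $\deg(1-\delta_N)=P_N(1)^{n-1}=N^{n-1}$ via the Artin-character formula for the $\bmu_N$-action on $T_\ell J_N$ with $\ell\equiv 1\pmod N$, and then match this against $\abs{\GV_N}=N^{n-1}$ using the inclusion $\GV_N\subseteq\FV_N$. Your aside that separability (so $\abs{\ker(1-\delta_N)}=\deg(1-\delta_N)$) follows from $\gcd(N,\Char K)=1$ is a small but worthwhile clarification that the paper leaves implicit.
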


For the case $N=p$ is a prime, Theorem~\ref{thm:invariant-theorem} was
already contained in \cite[Section 6]{MR1465369} and \cite[Proposition
3.2]{MR1612262}.

\begin{sect}\label{subsec:properties-of-zT-mod-PN}
  Since $P_N(T)=\prod_{D\mid N, D>1} \Phi_D(T)$, by the Chinese
  Remainder Theorem,
\begin{equation}
  \label{eq:prod-cyclotomic-field}
 \qq[T]/(P_N(T)) \cong \prod_{D\mid N, D>1 } \qq[T]/(\Phi_D(T))
 \cong \prod_{D\mid N, D>1 } \qq[\zeta_D].  
\end{equation}
On the other
hand, it is important to note that if $N$ is not prime, the embedding
\begin{equation}
  \label{eq:9}
\zz[T]/(P_N(T))\hookrightarrow \prod_{D\mid N, D>1 }
\zz[T]/(\Phi_D(T)).
\end{equation}
is \textit{not} an isomorphism.  For example, if $N=p^r$ for some
$r>1$, then
\[ \left(\zz[T]/(P_N(T))\right)\otimes_\zz \ff_p \cong
\ff_p[T]/(P_N(T)) = \ff_p[T]/((T-1)^{N-1}). \] The right hand side is
a local ring, and therefore not a direct product of proper subrings.
We leave it to the reader to prove that (\ref{eq:9}) is not an
isomorphism for arbitrary $N$ not a prime. However, from an explicit
construction (cf. \cite[Lemma 5.2]{MR1708603} or \cite[Subsection
2.6]{xue-yu}), one may show that the
idempotents in $\qq[T]/(P_N(T))$ lie in $\frac{1}{N}\zz[T]/(P_N(T))$.
Therefore,
\[\frac{1}{N}\zz[T]/(P_N(T))\supset \prod_{D\mid N, D>1 }
\zz[T]/(\Phi_D(T)),\] and the cokernel of (\ref{eq:9}) are
$N$-torsions. There is an isomorphism
\begin{equation}
  \label{eq:6}
 \zz[1/N, T]/(P_N(T)) \cong \prod_{D\mid N, D>1 }
\zz[1/N, T]/(\Phi_D(T)). 
\end{equation}
We leave it to the reader to show that
\begin{equation}
  \label{eq:14}
\Disc(P_N(T))=(-1)^{(N-1)(N-2)/2}N^{N-2}.
\end{equation}
\end{sect}


\begin{prop}\label{prop:free-away-from-N}
  For any prime $\ell \nmid (N\Char(K))$, the Tate module $T_\ell J_N$
  is a free $\zz_\ell[T]/(P_N(T))$-module of rank $n-1$.
\end{prop}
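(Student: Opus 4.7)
The plan is to reduce to a semisimple linear algebra computation on $J_N[\ell]$ by combining Lemma~\ref{lem:free-tate-module-p-torsion} with Corollary~\ref{cor:criterion-free-for-Ap}. By the genus formula \eqref{eq:genus}, $\dim J_N = (N-1)(n-1)/2 = (n-1)\deg P_N(T)/2$, matching the dimension hypothesis with $r = n-1$; so Lemma~\ref{lem:free-tate-module-p-torsion} reduces the freeness of $T_\ell J_N$ as a $\zz_\ell[T]/(P_N(T))$-module of rank $n-1$ to the freeness of $J_N[\ell]$ as an $\ff_\ell[T]/(\bar P_N(T))$-module of the same rank.

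Next, since $\ell \nmid N$, the formula \eqref{eq:14} gives $\Disc P_N(T) = \pm N^{N-2}$, which is a unit in $\ff_\ell$, so $\bar P_N(T)$ is separable over $\ff_\ell$. Writing $\bar P_N(T) = \prod_{i=1}^s h_i(T)$ as a product of pairwise distinct monic irreducibles (so every multiplicity $t_i$ equals $1$), Corollary~\ref{cor:criterion-free-for-Ap} further reduces the problem to verifying $\dim_{\ff_\ell} W_{\ell,i} = (n-1)\deg h_i(T)$ for every $i$, where $W_{\ell,i} = \{x \in J_N[\ell] : h_i(\delta_N)x = 0\}$.

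To check these dimensions I would use two inputs. First, by Subsection~\ref{subsec:characteristic-poly-of-deltaN} the characteristic polynomial of $T_\ell(\delta_N)$ equals $P_N(T)^{n-1}$; reducing modulo $\ell$, the characteristic polynomial of $\delta_N$ acting on $J_N[\ell]$ is $\bar P_N(T)^{n-1}$. Second, $\delta_N^N = 1$ in $\End(J_N)$ and $T^N - 1$ is separable in $\ff_\ell[T]$ (again because $\ell \nmid N$), so $\delta_N$ acts semisimply on $J_N[\ell]\otimes_{\ff_\ell}\bar\ff_\ell$, and each nontrivial $N$-th root of unity $\zeta \in \bar\ff_\ell$ therefore appears as an eigenvalue of $\delta_N$ with multiplicity exactly $n-1$. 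For each $h_i$, the subspace $W_{\ell,i}\otimes\bar\ff_\ell$ decomposes as the direct sum of the eigenspaces for the $\deg h_i$ distinct roots of $h_i$ in $\bar\ff_\ell$, producing $\dim_{\ff_\ell} W_{\ell,i} = (n-1)\deg h_i(T)$ as needed.

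I do not anticipate a genuine obstacle: the hypothesis $\ell \nmid N$ does all the heavy lifting by simultaneously delivering the separability of $\bar P_N(T)$ (making Corollary~\ref{cor:criterion-free-for-Ap} immediately applicable with every $t_i=1$) and the semisimplicity of $\delta_N$ on $J_N[\ell]$ (so that the eigenvalue multiplicities can be read off the characteristic polynomial). This is also why the proposition is restricted to $\ell \nmid N$; presumably the case $\ell \mid N$, where $\bar P_N(T)$ has repeated factors over $\ff_\ell$, will require a genuinely different argument later in the paper.
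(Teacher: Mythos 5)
Your proof is correct, but it takes a genuinely different route from the paper's. The paper exploits the ring structure directly: since $\ell \nmid N$, equation~\eqref{eq:6} shows $\zz_\ell[T]/(P_N(T))$ is a product of discrete valuation rings, so freeness of $T_\ell J_N$ (which is $\zz_\ell$-torsion-free) follows once one checks that $V_\ell J_N := T_\ell J_N \otimes_{\zz_\ell} \qq_\ell$ is free of rank $n-1$ over $\qq_\ell[T]/(P_N(T))$; and this last fact is read off immediately from the character computation $\mathbbm{h}_1 = (n-1)\mathbbm{u}_{\bmu_N}$ in Subsection~\ref{subsec:calculation-of-character}, because the representation space of the augmentation character is precisely $\qq_\ell[T]/(P_N(T))$. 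You instead descend to the residual level $J_N[\ell]$ via Lemma~\ref{lem:free-tate-module-p-torsion} and run the semisimple eigenvalue count against the mod-$\ell$ characteristic polynomial $\bar P_N(T)^{n-1}$, feeding the resulting dimensions into Corollary~\ref{cor:criterion-free-for-Ap}. Both arguments are sound and both hinge on $\ell \nmid N$; yours uses it for separability of $T^N-1$ and hence semisimplicity of $\delta_N$ on $J_N[\ell]$, while the paper uses it to make $\zz_\ell[T]/(P_N(T))$ a product of DVRs. A minor remark: once $\bar P_N$ is separable, $\ff_\ell[T]/(\bar P_N)$ is a product of fields, so the eigenvalue count already proves freeness directly and Corollary~\ref{cor:criterion-free-for-Ap} is slight overkill here. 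The real virtue of your framing is that it uses exactly the Lemma~\ref{lem:free-tate-module-p-torsion}/Corollary~\ref{cor:criterion-free-for-Ap} machinery that the paper deploys for the harder case $\ell \mid N$ in the inductive proof of Theorem~\ref{thm:main}, so it fits more uniformly into the overall argument, whereas the paper's proof of this proposition is a shortcut available only when $\ell \nmid N$.
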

\begin{proof}
  Since $\ell \nmid N$, $\zz_\ell[T]/(P_N(T))$ is a product of
  discrete valuation rings by (\ref{eq:6}).  Because $T_\ell(J_N)$ is
  $\zz_\ell$-torsion free, it is enough to prove that
  $V_\ell(J_N):=T_\ell(J_N)\otimes_{\zz_\ell}\qq_\ell$ is a free
  $\qq_\ell[T]/(P_N(T))$-module of rank $n-1$. This follows directly
  from Subsection~\ref{subsec:calculation-of-character}, noting that
  the representation space of the augmentation representation
  $\mathbbm{u}_{\bmu_N}$ over $\qq_\ell$ is isomorphic to
  $\qq_\ell[T]/(P_N(T))$.
\end{proof}

\begin{sect}\label{subsec:basic-properties-maps}
  For each integer $D\mid N$ and $D>1$, there exists a finite morphism
\[ \eta_D: C_{f,N}\to C_{f,D}, \qquad (X,Y)\mapsto (X, Y^{N/D}).\]
It induces two maps between the Jacobians 
\[ \eta_D: J_N\to J_D,\qquad \eta_D^*: J_D\to J_N,\] by Albanese
functoriality and Picard functoriality respectively.  We describe the
action of these maps on closed points.  Recall that
$J_D(K)=\Div^0(C_{f,D})/\mathord{\sim}$\,, the group of divisors of
degree zero modulo linear equivalence. Given a divisor $\DD\in
\Div(C_{f,D})$ on $C_{f,D}$, we write $[\DD]$ for the linear
equivalence class of $\DD$.  The abelian group $J_D(K)$ is generated
by the set of elements $\{[\QQ-\infty_D]\}_{\QQ\in C_{f,D}(K)}$, where
$\infty_D$ is the unique point at infinity on $C_{f,D}$. Let $\nu_D:
C_{f,D}\to J_D$ be the closed immersion defined by $\QQ\mapsto
[\QQ-\infty_D]$ (cf. \cite[Section 2]{MR861976}). Then by definition,
$\eta_D: J_N\to J_D$ is the unique homomorphism such that the
following commutative diagram holds (cf. \cite[Proposition 6.1]{MR861976})
\[
\begin{CD}
  C_{f,N}@>{\eta_D}>>  C_{f,D}\\
  @V{\nu_N}VV   @VV{\nu_D}V\\
  J_N @>{\eta_D}>>  J_D
\end{CD}
\]
It follows that 
\begin{equation}
  \label{eq:albanese-map}
  \eta_D([\QQ-\infty_N])=
[\eta_D(\QQ)-\eta_D(\infty_N)]=[\eta_D(\QQ)-\infty_D] \qquad \forall\,
\QQ\in C_{f,N}(K).
\end{equation}
On the other hand, let $M=N/D$, then $\eta_D:C_{f,N}\to C_{f,D}$
realizes $C_{f,D}$ as a quotient of $C_{f,N}$ by the group
$\bmu_M:=\dangle{(\xi_N)^D}\subseteq \bmu_N$. For each $\QQ\in
C_{f,N}(K)$, we write $e_\QQ$ for the ramification index of $\eta_D$
at $\QQ$. Then $\forall\, \QQ'\in C_{f,D}(K)$, 
\begin{equation}
  \label{eq:picard-map}
   \eta_D^*([\QQ'-\infty_D])= \left[\sum_{\QQ\mapsto
    \QQ'}e_{\QQ}\QQ-M\infty_N\right]= \left[\sum_{\xi\in \bmu_M}
  \rho_J(\xi)\QQ_0 -M\infty_N\right] ,
\end{equation}
where $\QQ_0\in C_{f,N}(K)$ is a fixed point in $\eta_D^{-1}(\QQ')$.

Combining (\ref{eq:albanese-map}) and (\ref{eq:picard-map}), we see
that 
\[\eta_D\circ \eta_D^* =\deg(\eta_D)\cdot \Id_{J_D}= M\cdot
\Id_{J_D}.\] Moreover, the composition of $ J_N\xrightarrow{\eta_D}
J_D \xrightarrow{\eta_D^*} J_N$ is given by
\[[\DD]\mapsto \sum_{\xi\in \bmu_M}\rho_J(\xi) [\DD] =Q_{N,D}(\delta_N)([\DD]), \qquad
\forall \; [\DD]\in J_N(K),\] where
\begin{equation}
  \label{eq:10}
Q_{N,D}(T):=\frac{T^N-1}{T^D-1}=\sum_{i=0}^{\frac{N}{D}-1}T^{iD}
  =\prod_{D'\mid N, D'\nmid D} \Phi_{D'}(T)\in \zz[T].
\end{equation}

Clearly $\delta_N^*=\delta_N^{-1}\in \End(J_N)$. Let
$\xi_D:=\xi_N^{N/D}\in \bmu_D\subset K^\times$ and define $\delta_D:
J_D\to J_D$ similarly to $\delta_N$.  Then $\delta_D\eta_D=\eta_D
\delta_N$ and $\eta_D^*\delta_D= \delta_N\eta_D^*$. Both $\eta_D$ and
$\eta_D^*$ are $\bmu_N$-equivariant if we let $\bmu_N$ act on $J_D$
via the map $\bmu_N\to \bmu_D$ that sends $\xi_N$ to $\xi_D$.
\end{sect}

\begin{sect}\label{subsec:canonical-principal-polarization}
  Let $\lambda_N:J_N\xrightarrow{\cong} J_N^\vee$ be the canonical
  principal polarization of $J_N$. Under the canonical identification
  $J_N= (J_N^\vee)^\vee$, $\lambda_N^\vee=\lambda_N$.  It induces a
  Rosati involution on $\End(J_N)$ defined by $\phi\mapsto
  \phi':=\lambda_N^{-1}\circ\phi^\vee\circ \lambda_N, \forall\,
  \phi\in \End(J_N)$. The polarization $\lambda_N$ is
  $\rho_J(\bmu_N)$-invariant. For any $\ell\neq \Char(K)$, let
  $E^{\lambda_N}: T_\ell J_N\times T_\ell J_N \to
  \zz_\ell(1):=\varprojlim_{i\in \nn} \bmu_{\ell^i}$ be the
  nondegenerate Riemann form (See \cite[Section 20]{MR2514037})
  defined by $\lambda_N$. Then \[ E^{\lambda_N}(T_\ell(\delta_N) x,
  T_\ell(\delta_N) y)= E^{\lambda_N}(x, y), \qquad \forall x, y\in
  T_\ell J_N.\] In particular,
  $\delta_N'=\delta_N^{-1}=(\delta_N)^{N-1}$, and
  $\zz[\delta_N]\subseteq \End(J_N)$ is invariant under the Rosati
  involution. For any $D\mid N$ with $D>1$, we have
  (cf. \cite[Proposition 11.11.6]{MR2062673} or \cite[Section
  17.5]{MR1987784} in the case $K=\cc$, and \cite[Proposition
  A.6]{MR2987306} in general)
  \begin{equation}
    \label{eq:polarization-albanese-picard}
\eta_D=\lambda_D^{-1}\circ (\eta_D^*)^\vee \circ \lambda_N. 
  \end{equation}
\end{sect}

We refer to \cite[Subsection 2.11]{xue-yu} for the following proposition.
\begin{prop}\label{prop:def-D-new-part}
  The map $\eta_M^*: J_M\to J_N$ is a closed immersion for all $M\mid
  N$ and $M>1$.
\end{prop}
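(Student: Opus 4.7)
The plan is to reduce the claim, via the standard duality for morphisms of abelian varieties, to verifying surjectivity of the Albanese map $\eta_M\colon J_N\to J_M$, and then to read off that surjectivity from the identity $\eta_M\circ\eta_M^*=(N/M)\cdot\Id_{J_M}$ already recorded in Subsection~\ref{subsec:basic-properties-maps}.

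First I would check that $\eta_M\colon J_N\to J_M$ is surjective. The image of $\eta_M$ is a closed abelian subvariety of $J_M$, and it contains $\eta_M(\eta_M^*(J_M))=(N/M)J_M$. Since multiplication by the positive integer $N/M$ on $J_M$ is an isogeny, it is surjective, so $(N/M)J_M=J_M$ and thus $\img(\eta_M)=J_M$.

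Second I would invoke the standard duality principle: a homomorphism $\phi\colon A\to B$ of abelian varieties is a closed immersion (equivalently, has trivial scheme-theoretic kernel) if and only if the dual morphism $\phi^\vee\colon B^\vee\to A^\vee$ is faithfully flat, i.e., surjective. Taking $\phi=\eta_M^*$, I combine this criterion with the polarization identity (\ref{eq:polarization-albanese-picard}) from Subsection~\ref{subsec:canonical-principal-polarization}, which rearranges to
\[
(\eta_M^*)^\vee=\lambda_M\circ\eta_M\circ\lambda_N^{-1}.
\]
Since $\lambda_M$ and $\lambda_N$ are isomorphisms (both $J_M$ and $J_N$ carry canonical principal polarizations) and $\eta_M$ is surjective by the previous step, $(\eta_M^*)^\vee$ is surjective. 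Therefore $\eta_M^*$ is a closed immersion.

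The only genuinely non-formal ingredient is the duality statement (closed immersion $\Leftrightarrow$ dual surjective), which is standard in the theory of abelian varieties. I do not anticipate any substantive obstacle: the proof is a short chain reassembling the two identities $\eta_M\circ\eta_M^*=(N/M)\Id_{J_M}$ and $\eta_M=\lambda_M^{-1}\circ(\eta_M^*)^\vee\circ\lambda_N$ that have already been set up in Subsections~\ref{subsec:basic-properties-maps} and~\ref{subsec:canonical-principal-polarization}.
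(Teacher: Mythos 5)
Your duality criterion is misstated, and the gap is fatal to the argument. The correct duality principle for a homomorphism $\phi\colon A\to B$ of abelian varieties is: $\phi$ has \emph{finite} kernel if and only if $\phi^\vee\colon B^\vee\to A^\vee$ is surjective. The stronger implication you assert --- that surjectivity of $\phi^\vee$ forces $\phi$ to be a closed immersion, i.e.\ to have \emph{trivial} scheme-theoretic kernel --- is false. The simplest counterexample is $\phi=[n]\colon A\to A$ for $n>1$: its dual is again $[n]\colon A^\vee\to A^\vee$, which is surjective (indeed faithfully flat), yet $[n]$ has the nontrivial finite kernel $A[n]$ and is certainly not a closed immersion. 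Your chain of deductions therefore establishes only that $\eta_M^*$ is an isogeny onto its image; in fact one can see this more directly from $\eta_M\circ\eta_M^*=(N/M)\Id_{J_M}$, which already gives $\ker\eta_M^*\subseteq J_M[N/M]$ without any appeal to duality or polarizations.

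The distinction matters here, and cannot be circumvented by formal manipulations with polarizations and functoriality of $J_N$ and $J_M$. Whether $\eta_M^*\colon\Pic^0(C_{f,M})\to\Pic^0(C_{f,N})$ actually has trivial kernel is a geometric question about the cyclic cover $C_{f,N}\to C_{f,M}$: the kernel consists of degree-zero line bundles on $C_{f,M}$ that become trivial after pullback, and for an \emph{unramified} cyclic cover of degree $m$ this kernel contains a nontrivial cyclic subgroup of order $m$ coming from the connected components of the covering. What makes the kernel trivial in the situation at hand is the presence of a totally ramified point of $\eta_D\colon C_{f,N}\to C_{f,M}$ (e.g.\ the point at infinity), which forces any such line bundle to be already trivial. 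This is precisely the content of \cite[Subsection 2.11]{xue-yu}, to which the paper defers for the proof. A correct argument must therefore use the ramification data of the cover, which your purely formal approach never touches. Later in the paper (Proposition~\ref{prop:characterization-of-JM}) the closed-immersion property is used exactly to conclude $\eta_M^*(\FV_M)\cong(\zmod{M})^{n-1}$, and a merely finite kernel would not suffice there.
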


Since $P_M(\delta_M)J_M=\{0\}$, one has $P_M(\delta_N)(\eta_M^*J_M)=
\eta_M^*P_M(\delta_M)J_M=\{0\}$.  We prove that the image
$\eta_M^*J_M$ is in fact uniquely characterized as a subvariety of
$J_N$ by this property.

\begin{prop}\label{prop:characterization-of-JM}
  For each integer $M\mid N$ and $M> 1$, the kernel of
  $P_M(\delta_N): J_N\to J_N$ is $\eta_M^*(J_M)$.
\end{prop}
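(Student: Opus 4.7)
The plan is to prove $\ker P_M(\delta_N) = \eta_M^*(J_M)$ in three stages: inclusion, dimension match, and connectedness.

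First, the intertwining $\delta_N \circ \eta_M^* = \eta_M^* \circ \delta_M$ from Subsection~\ref{subsec:basic-properties-maps} gives $P_M(\delta_N) \circ \eta_M^* = \eta_M^* \circ P_M(\delta_M) = 0$ (since $P_M$ is the minimal polynomial of $\delta_M$ on $J_M$), yielding $\eta_M^*(J_M) \subseteq \ker P_M(\delta_N)$.

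Second, since $\eta_M^*$ is a closed immersion by Proposition~\ref{prop:def-D-new-part}, the identity $\eta_M^* \circ \eta_M = Q_{N,M}(\delta_N)$ forces $\ker \eta_M = \ker Q_{N,M}(\delta_N)$. Applying Lemma~\ref{lem:reduced-kernel} with $F = P_M$ and $G = Q_{N,M}$ (whose resultant divides $\Disc(P_N) = \pm N^{N-2}$, hence is coprime to $\Char(K)$) gives $\dim \ker P_M(\delta_N) + \dim \ker \eta_M = \dim J_N$. Since $\eta_M$ is surjective (via $\eta_M \circ \eta_M^* = (N/M)\Id$), $\dim \ker \eta_M = \dim J_N - \dim J_M$, so $\dim \ker P_M(\delta_N) = \dim J_M = \dim \eta_M^*(J_M)$. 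Hence $\eta_M^*(J_M)$ coincides with the identity component of $\ker P_M(\delta_N)$.

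The main obstacle is to show the finite quotient $F := \ker P_M(\delta_N)/\eta_M^*(J_M)$ is trivial. My key input is the polarization identity~(\ref{eq:polarization-albanese-picard}), which realizes $\ker \eta_M$ as $\lambda_N^{-1}\bigl(\ker (\eta_M^*)^\vee\bigr) = \lambda_N^{-1}\bigl((J_N/\eta_M^*(J_M))^\vee\bigr)$, an abelian variety (in particular connected). Combining with the dimension match, the image of $P_M(\delta_N)$ is a connected subvariety contained in $\ker \eta_M$ of the same dimension, so it equals $\ker \eta_M$. Therefore $P_M(\delta_N)$ descends to an isogeny $\bar P_M : C := J_N/\eta_M^*(J_M) \to \ker \eta_M$ with kernel $F$.

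To finish, I would compute $|F| = \deg \bar P_M$ via composition with the natural isogeny $\alpha : \ker \eta_M \to C$, whose kernel is $\ker \eta_M \cap \eta_M^*(J_M) = \eta_M^*(J_M[N/M])$, of order $(N/M)^{(n-1)(M-1)}$. The composite $\bar P_M \circ \alpha$ is the restriction of $P_M(\delta_N)$ to $\ker \eta_M$, whose degree is computable using Proposition~\ref{prop:free-away-from-N}: for $\ell \nmid N$, the free $\qq_\ell[T]/(P_N(T))$-module structure on $V_\ell(J_N)$ decomposes $V_\ell(\ker \eta_M)$ into its $\qq_\ell(\zeta_D)$-isotypic pieces for $D \mid N$, $D \nmid M$, on each of which $P_M(\delta_N)$ acts as multiplication by $P_M(\zeta_D)$; taking determinants yields $\deg\bigl(P_M(\delta_N)|_{\ker \eta_M}\bigr) = \Res(P_M, Q_{N,M})^{n-1}$. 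The arithmetic identity $\Res(P_M, Q_{N,M}) = (N/M)^{M-1}$, which I expect to establish in Section~4, then gives $|F| \cdot (N/M)^{(n-1)(M-1)} = (N/M)^{(n-1)(M-1)}$, forcing $F = 0$ and completing the proof.
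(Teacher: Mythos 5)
Your proof is correct, but it takes a genuinely different route from the paper's. The paper argues on the curve: given $[\DD]\in\ker P_M(\delta_N)$, it produces a canonical effective representative $\DD_0$ (the minimal one with $l(\DD+t\infty)=1$), uses its uniqueness to deduce $(\delta_N)^M$-invariance of $\DD_0$ as a divisor, splits $\DD_0$ into a part supported on the ramification locus and a complementary part, pushes the latter down to $C_{f,M}$ directly, and handles the ramified part by observing it lands in $\FV_N[M]$ and then invoking Theorem~\ref{thm:invariant-theorem} together with Proposition~\ref{prop:def-D-new-part} to identify $\FV_N[M]=\eta_M^*(\FV_M)$. Your argument instead stays entirely on the Jacobian: Lemma~\ref{lem:reduced-kernel} with $P_N=P_M\cdot Q_{N,M}$ (the resultant divides $\Disc P_N=\pm N^{N-2}$, hence is coprime to $\Char K$) gives the dimension count; duality via~(\ref{eq:polarization-albanese-picard}) shows $\ker\eta_M=\lambda_N^{-1}\bigl((J_N/\eta_M^*J_M)^\vee\bigr)$ is connected, so the image of $P_M(\delta_N)$ is all of $\ker\eta_M$; and the component group $F$ is killed by matching degrees, $|F|\cdot|J_M[N/M]|=\deg\bigl(P_M(\delta_N)|_{\ker\eta_M}\bigr)$, where $|J_M[N/M]|=(N/M)^{(n-1)(M-1)}$ and the right side equals $|\Res(P_M,Q_{N,M})|^{n-1}=(N/M)^{(n-1)(M-1)}$ via the freeness of $V_\ell J_N$ over $\qq_\ell[T]/(P_N)$ (Proposition~\ref{prop:free-away-from-N}) for a single $\ell\nmid N\Char K$. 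The resultant identity $\Res(P_M,Q_{N,M})=(N/M)^{M-1}$ is immediate since $Q_{N,M}(x)=\sum_{i=0}^{N/M-1}x^{iM}=N/M$ at every root $x$ of $P_M$, so no new lemma is really needed in Section~4. Your route avoids Riemann--Roch and does not rely on Theorem~\ref{thm:invariant-theorem} at all, which is aesthetically pleasing; the paper's route is more elementary in its prerequisites (it needs only the divisor-class description) and makes the role of the fixed-point group $\FV_N$ explicit. Both are correct, and there is no circularity in either direction.
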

\begin{proof}
  Let
  $\beta_M:=P_M(\delta_N)=\sum_{i=0}^{M-1}(\delta_N)^i\in \End(J_N)$. Since
  $P_N(T)$ is separable in $K[T]$, $\ker\beta_M$ is reduced by
  Lemma~\ref{lem:reduced-kernel}. As remarked, $\eta_M^*(J_M)\subseteq
  \ker \beta_M$.
  
  For any divisor $\DD\in \Div(C_{f,N})$, we write $\LZ(\DD)$ for the
  invertible sheaf on $C_{f,N}$ associated to $\DD$ \cite[Section
  II.6, p144]{MR0463157}, \[L(\DD):=H^0(C_{f,N}, \LZ(\DD))=\{g\in
  K(C_{f,N})\mid \Div(g)+\DD\geq 0\},\] and $l(\DD):=\dim_K
  L(\DD)$. If $x=[\DD]\in J_N(K)$ is nonzero, then $l(\DD)=0$.  By the
  Riemann-Roch Theorem \cite[Theorem IV.1.3]{MR0463157},
  $l(\DD+t\infty)= t+1-g(C_{f,N})>0$ if $t\in \nn$ is large
  enough. Since $l(\DD+(t+1)\infty)-l(\DD+t\infty)\leq 1$ for all
  $t\geq 0$, there exists a smallest $t$ such that
  $l(\DD+t\infty)=1$. In other words, for this $t$ there exists a
  unique effective divisor $\DD_0>0$ such that $\DD+t\infty \sim
  \DD_0$. Clearly, $\DD_0$ depends only on $[\DD]$. The coefficient
  $b_\infty$ in $\DD_0=\sum_{\PP\in C_{f,N}(K)} b_\PP \PP$ is
  necessarily zero by the minimality of $t$.

  Any point $x=[\DD]\in \ker \beta_M$ is fixed by
  $(\delta_N)^M$. Choose $t$ and $\DD_0$ for $[\DD]$ as above. By the
  uniqueness of $\DD_0$, we must have $(\delta_N)^M\DD_0= \DD_0$
  (Equality of divisors). Let $\DD_0= \DD_0'+\DD_0''$, with $\DD_0'=
  \sum_{i=1}^n b_i \QQ_i$ and the support of $\DD_0''$ disjoint from
  $\FV(C_{f,N})=\{\QQ_1, \cdots, \QQ_n, \infty\}$. We write $t_1=\deg
  \DD_0'$ and $t_2=\deg \DD_0''$, then $t_1+t_2=t$ and $\DD\sim
  (\DD_0'-t_1\infty) + (\DD_0''-t_2\infty)$. Clearly $\DD_0''$ is
  fixed by $(\delta_N)^M$. In other words, if $\PP\in \supp \DD_0''$,
  then $\PP'\in \supp \DD_0''$ for all $\PP'\in \eta_M^{-1}
  (\eta_M(\PP))=\{ \PP, (\delta_N)^M \PP, \cdots,
  (\delta_N)^{N-M}\PP\}$.  Therefore, $y:=[\DD_0''-t_2\infty]\in
  \eta_M^*(J_M)(K)$.

  Now we have $z:=[\DD_0'-t_1\infty]=[\DD]-[\DD_0''-t_2\infty]\in
  \ker\beta_M$. By construction $\delta_N z=z$. So $\beta_M z=
  \sum_{i=0}^{M-1} (\delta_N)^i z =M z$, and hence $z\in \FV_N[M]$. We
  claim that $\FV_N[M]=\eta_M^*(\FV_M)$. Indeed, by
  Theorem~\ref{thm:invariant-theorem}, $\FV_N\simeq  (\zmod{N})^{n-1}$, so $\FV_N[M]\cong (\zmod{M})^{n-1}$. On the other
  hand, $\eta_M^*: J_M\to J_N$ is a closed immersion by
  Proposition~\ref{prop:def-D-new-part}, so
  \[ (\zmod{M})^{n-1}\simeq \eta_M^*(\FV_M)\subseteq \FV_N[M]\simeq
  (\zmod{M})^{n-1}.\] It follows that $z\in \eta_M^*(\FV_M)\subset
  \eta_M^*(J_M)(K)$. So $x=y+z\in \eta_M^*(J_M)(K)$, and $\ker \beta_M=
  \eta_M^*(J_M)$.
\end{proof}

\begin{proof}[Proof of the main theorem by induction]
  If $N=\ell$ is a prime, then $\zz[T]/(P_\ell(T))=\zz[T]/(\Phi_\ell(T))$ is
  isomorphic to the ring of integers $\zz[\zeta_\ell]$ in the cyclotomic
  field $\qq(\zeta_\ell)$. The theorem follows from
  Lemma~\ref{lem:free-tate-module-over-ring-of-integers}.

  Suppose that the theorem holds for all $J_D$ with $D\mid N$ and
  $D\neq N$.  The case $\ell\nmid N\Char(K)$ is already treated in
  Proposition~\ref{prop:free-away-from-N}. Now fix a prime $\ell\mid
  N$. By Lemma~\ref{lem:free-tate-module-p-torsion}, it is enough to
  prove that $J_N[\ell]$ is a free $\ff_\ell[T]/(\bar{P}_N(T))$-module
  of rank $n-1$.  Let $N=qM$ with $q=\ell^r$ for some $r\in \nn$ and
  $\gcd(q,M)=1$.  In $\ff_\ell[T]$, $\bar{P}_N(T)=(T^N-1)/(T-1)$
  factorizes as
  \begin{equation}
    \label{eq:factorization-of-Pn-mod-p}
        \frac{T^N-1}{T-1}=\frac{(T^M-1)^q}{T-1}=(T-1)^{q-1}\bar{P}_M(T)^q=
    (T-1)^{q-1} \prod_{i=1}^s h_i(T)^q,    
  \end{equation}
  where each $h_i(T)$ is a monic irreducible factor of
  $\bar{P}_M(T)$. Because $\gcd(M,\ell)=1$, $\bar{P}_M(T)$ is separable
  over $\ff_\ell$, so all $h_i(T)$ in
  (\ref{eq:factorization-of-Pn-mod-p}) are distinct. By
  Theorem~\ref{thm:invariant-theorem}, 
\[W_{\ell, 0}:=\{ x\in J_N[\ell]\mid (\delta_N-1)x=0\}=\FV_N[\ell]\cong
(\zmod{\ell})^{n-1}.\]
On the other hand, by Proposition~\ref{prop:characterization-of-JM}, 
\[ W_{\ell, i}=\{x \in J_N[\ell]\mid h_i(\delta_N)x=0\}\subset \eta_M^*
J_M[\ell].\] By the induction hypothesis, $J_M[\ell]$ is a free
$\ff_\ell[T]/(\bar{P}_M(T))$-module of rank $n-1$. So $\dim_{\ff_\ell} W_{\ell,
  i}= (n-1)\deg h_i(T)$ by
Corollary~\ref{cor:criterion-free-for-Ap}. Applying the same Corollary
again, we see that $J_N[\ell]$ is a free $\ff_\ell[T]/(\bar{P}_N(T))$-module
of rank $n-1$. Therefore, $T_\ell J_N$ is a free
$\zz_\ell[T]/(P_N(T))$-module of rank $n-1$ by
Lemma~\ref{lem:free-tate-module-p-torsion}. 
\end{proof}
\begin{proof}[Proof of Corollary~\ref{cor:optimal-embedding}]
  Recall that we have an embedding of $\qq$-algebras
  \[ \iota: E:=\qq[T]/(P_N(T))=(\zz[T]/(P_N(T)))\otimes_\zz
  \qq\hookrightarrow \End^0(J_N),\] and we want to show that
  $E\cap \End(J_N) = \zz[T]/(P_N(T))$, where the intersection is taken
  within $\End^0(J_N)$. Since $R_\ell:=\zz_\ell[T]/(P_N(T))$ is the
  maximal order in $E_\ell:=E\otimes_\qq \qq_\ell$ for any prime
  $\ell\nmid N$, it is enough to prove that for all $p\mid N$,
  \[ R_p:=\zz_p[T]/(P_N(T)) = E_p \cap (\End (J_N)\otimes_\zz \zz_p)
  \quad \text{inside} \quad \End^0(J_N)\otimes_\qq \qq_p.\] By
  \cite[Theorem 19.3]{MR2514037}, $E_p\cap (\End(J_N)\otimes_\zz
  \zz_p) \subseteq \End_{R_p}(T_p(J_N))$. So it reduces to prove that
  \[ R_p = E_p \cap \End_{R_p}(T_p(J_N)) \quad \text{inside}
  \quad \End_{E_p}(V_p(J_N))=\End_{R_p}(T_p(J_N))\otimes_{\zz_p}\qq_p.\]
  Now by Theorem~\ref{thm:main}, $\End_{R_p}(T_p(J_N))\simeq
  \Mat_{n-1}(R_p)$ and hence $\End_{E_p}(V_p(J_N))\simeq
  \Mat_{n-1}(E_p)$. The embedding $\iota\otimes \qq_p$ identifies
  $E_p$ with the scalar matrices $E_p\cdot\Id$. Clearly $ E_p\cdot \Id
  \cap \Mat_{n-1}(R_p) =R_p\cdot \Id$.
\end{proof}

We assume that $K$ is \textit{not} algebraically closed exclusively for the
following theorem. Let $\bar{K}$ be a fixed algebraic closure of $K$.

\begin{thm}
  Let $K$ be a field of characteristic zero, and $f(X)\in K[X]$ be a
  polynomial with no multiple roots and $n=\deg f\geq 5$. Suppose $p$ is
  a prime that does not divide $n$, Let $r>1$ be a positive integer
  and $q=p^r$. Assume also that either $n=q+1$ or $q$ does no divides
  $n-1$. If $p=2$ then we assume addditionally that $n=kq+c$ with
  nonnegative integers $k$ and $c<q$ such that either $k$ is odd or
  $c<q/2$. Suppose $\Gal(f)$ contains a doubly transitive simple
  non-abelian subgroup $\mathcal{G}$. Then
  $\End_{\bar{K}}(\Jac(C_{f,q}))\simeq \zz[T]/(P_q(T))$.
\end{thm}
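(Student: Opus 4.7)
The plan is to proceed by induction on $r$, combining Zarhin's determination of $\End_{\bar K}(J_q^\nw)$ with the isogeny decomposition furnished by Theorem~\ref{thm:results-on-JN-new} and Proposition~\ref{prop:characterization-of-JM}, and then applying Corollary~\ref{cor:optimal-embedding} to descend from the rational endomorphism algebra to the endomorphism ring itself. The base case $r=1$ is Zarhin's direct theorem from \cite{MR2040573, MR2166091}, asserting $\End_{\bar K}(J_p) = \zz[\zeta_p]$ under the stated Galois-theoretic hypothesis on $f$; the inductive hypothesis for larger $r$ then reads $\End_{\bar K}(J_{q/p}) \cong \zz[T]/(P_{q/p}(T))$.

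For the inductive step, the factorization $P_q(T) = \Phi_q(T) \cdot P_{q/p}(T)$ into coprime factors, together with Theorem~\ref{thm:results-on-JN-new}, yields an isogeny
\[
J_q \sim J_q^\nw \oplus \ker P_{q/p}(\delta_q),
\]
where Proposition~\ref{prop:characterization-of-JM} identifies $\ker P_{q/p}(\delta_q)$ with $\eta_{q/p}^*(J_{q/p}) \cong J_{q/p}$. Applying the theorems of Zarhin recalled in the remark after Theorem~\ref{thm:results-on-JN-new} (from \cite{MR2040573, MR2166091, MR2349666, MR2471095}) to $J_q^\nw$ under the given arithmetic hypotheses yields $\End^0_{\bar K}(J_q^\nw) = \qq(\zeta_q)$; since this algebra is a field, any isogeny decomposition $J_q^\nw \sim B^k$ would force $\End^0(J_q^\nw) \cong \Mat_k(\End^0(B))$ and hence $k=1$, so $J_q^\nw$ is $\bar K$-simple. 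By the inductive hypothesis, $\End^0_{\bar K}(J_{q/p}) \cong \prod_{i=1}^{r-1}\qq(\zeta_{p^i})$, which by the same argument forces an isogeny decomposition $J_{q/p} \sim \bigoplus_{i=1}^{r-1} B_i$ with each $B_i$ simple, pairwise non-isogenous, and $\End^0_{\bar K}(B_i) = \qq(\zeta_{p^i})$.

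The core step is the orthogonality $\Hom_{\bar K}(J_q^\nw, J_{q/p}) = 0$. Since $J_q^\nw$ is simple, a nonzero such morphism would be an isogeny onto a simple subfactor $B_i$ of $J_{q/p}$, forcing $\qq(\zeta_q) \cong \qq(\zeta_{p^i})$ for some $i < r$, which is absurd because the conductors differ. The resulting vanishing of Hom, combined with the simple-factor decompositions above, gives
\[
\End^0_{\bar K}(J_q) \cong \End^0_{\bar K}(J_q^\nw) \oplus \End^0_{\bar K}(J_{q/p}) \cong \qq(\zeta_q) \oplus \qq[T]/(P_{q/p}(T)) \cong \qq[T]/(P_q(T)),
\]
and Corollary~\ref{cor:optimal-embedding} then pins down $\End_{\bar K}(J_q) = \iota(\zz[T]/(P_q(T)))$, as claimed.

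The main obstacle is the careful verification that the arithmetic hypotheses in the statement---in particular the condition ``$n=q+1$ or $q \nmid (n-1)$'' together with the extra condition at $p=2$---really match the hypotheses under which the cited papers of Zarhin establish $\End^0_{\bar K}(J_q^\nw) = \qq(\zeta_q)$; the papers \cite{MR2040573, MR2166091, MR2349666, MR2471095} treat several overlapping regimes, and the theorem above is tailored to fall squarely within the strongest available one. Once this verification is in place, the remaining steps are formal consequences of the isogeny decomposition of $J_q$ and Corollary~\ref{cor:optimal-embedding}.
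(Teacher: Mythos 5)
Your proposal takes a genuinely different and considerably longer route than the paper, and as written it has a gap. The paper's own proof is a one-line reduction: the hypotheses of the theorem are precisely those of \cite[Corollary~5.4]{MR2471095}, which already establishes $\End^0_{\bar K}(J_q)=\qq[\delta_q]\simeq \qq[T]/(P_q(T))$ for the \emph{whole} Jacobian, after which Corollary~\ref{cor:optimal-embedding} upgrades this to the integral statement $\End_{\bar K}(J_q)=\iota(\zz[T]/(P_q(T)))$. You instead use Zarhin only for the new part $J_q^\nw$ and try to rebuild $\End^0(J_q)$ by combining the decomposition $J_q\sim J_q^\nw\times J_{q/p}$ with an induction on $r$.

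The problem is that the induction does not close. Your inductive hypothesis is the theorem itself applied with $q$ replaced by $q/p$, but the arithmetic hypotheses are not stable under $q\mapsto q/p$. Take the clause ``$n=q+1$ or $q\nmid(n-1)$'': if $n=q+1$, then $n-1=q$ is divisible by $q/p$, and $n\neq q/p+1$ (since $p>1$), so \emph{neither} alternative holds for $q/p$, and the inductive hypothesis $\End_{\bar K}(J_{q/p})\cong\zz[T]/(P_{q/p}(T))$ cannot be invoked from the theorem. A similar instability plagues the extra condition when $p=2$. So the sentence ``By the inductive hypothesis, $\End^0_{\bar K}(J_{q/p})\cong\prod_{i=1}^{r-1}\qq(\zeta_{p^i})$'' is unjustified. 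You would have to either (i) prove directly that $\End^0(J_{p^s}^\nw)=\qq(\zeta_{p^s})$ for every $1\le s\le r$ under the hypotheses stated for $q=p^r$ (which is essentially Zarhin's own work), or (ii) just cite \cite[Corollary~5.4]{MR2471095} for the full Jacobian as the paper does. The part of your argument after the Hom-vanishing (simplicity of $J_q^\nw$ from $\End^0(J_q^\nw)$ being a field, the cyclotomic conductor argument for $\Hom(J_q^\nw,J_{q/p})=0$, and the final appeal to Corollary~\ref{cor:optimal-embedding}) is sound, but it is logically downstream of the two unresolved points: the failure of the induction and the unverified match between the theorem's hypotheses and those needed for $\End^0_{\bar K}(J_q^\nw)=\qq(\zeta_q)$, which you yourself flag as the ``main obstacle'' without resolving it.
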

\begin{proof}
  By \cite[Corollary 5.4]{MR2471095},
  $\End_{\bar{K}}^0(\Jac(C_{f,q}))=\qq[\delta_q]\simeq
  \qq[T]/(P_q(T))$ under the above assumptions.  Now the theorem
  follows directly from Corollary~\ref{cor:optimal-embedding}.
\end{proof}

The rest of this section is devoted to the proof of
Theorem~\ref{thm:results-on-JN-new}. 
\begin{sect}\label{subsec:def-old-and-new-part}
  For each $D\mid N$ and $D>1$, let
  $\gamma_D:=\Phi_D(\delta_N)\in \End(J_N)$.  By
  Theorem~\ref{thm:equivalence-tate-free-kernel-connected},
  $\ker\gamma_D$ is an abelian subvariety of $J_N$ of dimension
  $\varphi(D)(n-1)/2$. We give a more geometric description of these
  subvarieties.  Suppose that $D_1\mid D_2\mid N$ with
  $D_i>1$, then the map $\eta_{D_1}: C_{f,N}\to C_{f,D_1}$ factors as
  $C_{f,N}\to C_{f,D_2}\to C_{f,D_1}$. By functoriality,
  $\eta_{D_1}^*: J_{D_1}\hookrightarrow J_N$ factors as
  $J_{D_1}\hookrightarrow J_{D_2}\hookrightarrow J_N$. In particular,
  $\eta_{D_1}^*J_{D_1}$ is a subvariety of $\eta_{D_2}^*J_{D_2}$
  inside $J_N$. Following \cite[Section 5]{MR1708603}, we define
  \[ J_N^{\mathrm{old}}:= \sum_{\substack{D\mid N,\\ 1<D<N}} \eta_D^*
  J_D= \sum_{p\mid N} \,\eta_{(N/p)}^*\,J_{(N/p)}.\] The orthogonal
  complement of $J_N^{\mathrm{old}}$ with respect to the canonical
  principal polarization $\lambda_N$ is called the new part of the
  Jacobian and denoted by $J_N^\nw$.  We write $\epsilon_N:J_N^\nw \to
  J_N$ for the canonical embedding. If $N=p$ is a prime, $J_p^\nw$ is
  defined to be $J_p=\Jac(C_{f,p})$.

  Let $G(T)=P_N(T)/\Phi_N(T)\in \zz[T]$. By
  Theorem~\ref{thm:equivalence-tate-free-kernel-connected} and
  Proposition~\ref{prop:characterization-of-JM},
\[ J_N^{\mathrm{old}}= \sum_{D\mid N, 1<D<N}
  \ker \gamma_D.\]
Therefore, $J_N^{\mathrm{old}}= \ker G(\delta_N)$.  In particular,
\[\dim J_N^{\mathrm{old}}= (n-1)\deg G(T)/2=(n-1)(N-1-\varphi(N))/2,\]
and $\dim J_N^\nw =\dim J_N-\dim J_N^{\mathrm{old}}=
\varphi(N)(n-1)/2$ (cf. also \cite[Corollary 5.4]{MR1708603}). 

The map $\sum \eta_{N/p}^*: \prod_{p\mid N} J_{(N/p)} \to J_N$ factors
as
\[\prod_{p\mid N} J_{(N/p)} \xrightarrow{\pi} J_N^{\mathrm{old}}
\xrightarrow{\jmath} J_N.  \] The orthogonal complement of
$J_N^{\mathrm{old}}$ with respect to $\lambda_N$ is defined to be the
identity component (with the reduced subscheme structure) of the map
(See \cite[Theorem 19.1]{MR2514037})
\begin{equation}
  \label{eq:15}
 J_N\xrightarrow{\lambda_N} J_N^\vee \xrightarrow{\jmath^\vee}
 (J_N^{\mathrm{old}})^\vee.  
\end{equation}

Now compose the map in (\ref{eq:15}) with
\begin{equation}
  \label{eq:16}
 (J_N^{\mathrm{old}})^\vee \xrightarrow{\pi^\vee}\prod_{p\mid N}
J_{(N/p)}^\vee \xrightarrow{\prod_{p\mid N}\lambda_{(N/p)}^{-1}}
\prod_{p\mid N} J_{(N/p)}\xrightarrow{\prod_{p\mid N}\eta_{(N/p)}^*} \prod_{p\mid N}J_N  
\end{equation}
 and then apply (\ref{eq:polarization-albanese-picard}), we obtain the map 
 \begin{equation}
   \label{eq:17}
\prod_{p\mid N}Q_{N,N/p}(\delta_N): J_N\to \prod_{p\mid N} J_N,    
 \end{equation}
 where $Q_{N,N/p}(T)\in \zz[T]$ is defined in (\ref{eq:10}). By
 Lemma~\ref{lem:the-ideal-gen-QND-is-Phi-N}, the ideal in $\zz[T]$
 generated by $Q_{N,N/p}(T)$ for all $p\mid N$ is $(\Phi_N(T))$. So the
 kernel of (\ref{eq:17}) coincides with $\ker\gamma_N$. On the
 other hand, $\ker (\jmath^\vee\circ \lambda_N)$ is contained in the
 kernel of (\ref{eq:17}). Comparing dimensions, we obtain that
 \begin{equation}
   \label{eq:def-JN-new}
J_N^\nw = \ker (\jmath^\vee\circ \lambda_N)=\ker
 \gamma_N=\ker\Phi_N(\delta_N).   
 \end{equation}
 As a side result, $\pi^\vee$ must be a closed immersion since otherwise the
 kernel of $\jmath^\vee\circ \lambda_N$ will be properly contained in
 that of (\ref{eq:17}).  There is an exact sequence of abelian
 varieties
\begin{equation}
  \label{eq:18}
  0\leftarrow (J_N^{\mathrm{old}})^\vee \xleftarrow{\jmath^\vee\circ \lambda_N} J_N
  \xleftarrow{\epsilon_N} J_N^\nw \leftarrow 0.
\end{equation}
For $1<D<N$ and $D\mid N$, we have $\Phi_D(T)\mid P_D(T)$, so $\ker
\gamma_D\subseteq \ker(P_D(\delta_N))=\eta_D^*J_D$. Recall that we
have $\delta_N\eta_D^*=\eta_D^*\delta_D$ by
Subsection~\ref{subsec:basic-properties-maps}. Therefore, 
\[ \ker \gamma_D= \ker
\Phi_D(\delta_N)=\eta_D^*(\ker(J_D\xrightarrow{\Phi_D(\delta_D)}
J_D))= \eta_D^* (J_D^\nw).\]
\end{sect}

\begin{sect}
  Since $\eta_D^*:J_D\to J_N$ is a closed immersion for each $D\mid N$ and
  $D>1$, we may and will regard $J_D$ and $J_D^\nw$ as subvarieties of
  $J_N$ via $\eta_D^*$.  Let $\omega(N)$ be the number of distinct
  prime factors of $N$. If $\omega(N)=1$, then $q:=N=p^r$ is a prime
  power. In this case $J_q^{\mathrm{old}}=J_{q/p}$.  By
  \cite[Subsection 2.11]{xue-yu}, $J_q^\nw\cap J_{q/p}=
  J_{q/p}[p]$. It follows that 
  \[\forall 1\leq i \leq r-1, \qquad J_q^\nw\cap J_{p^i}=J_{p^i}[p],
  \qquad J_q^\nw \cap J_{p^i}^\nw=J_{p^i}^\nw[p]. \] 

  Now assume that $\omega(N)\geq 2$.  Let $\OO:=\zz[\zeta_N]$ be the
  ring of integers in the cyclotomic field $\qq(\zeta_N)$. By
  \cite[Proposition 2.8]{MR1421575}, $1-\zeta_N$ is a unit in
  $\OO$. For each $D\mid N$ and $1<D<N$, there is a natural action of
  $\zz[T]/(\Phi_N(T), P_D(T))$ on $J_N^\nw\cap J_D$. We have
  \[ \zz[T]/(\Phi_N(T), P_D(T))\cong \OO/ (P_D(\zeta_N))=
  \OO/((\zeta_N)^D-1)\cong \zz[T]/(\Phi_N(T), T^D-1)\] since
  $P_D(\zeta_N)=((\zeta_N)^D-1)/(\zeta_N-1)$. Clearly, $(\zeta_N)^D$
  is a primitive $(N/D)$-th root of unity, so if $\omega(N/D)>1$, then
  $(\zeta_N)^D-1$ is again a unit in $\OO$. It follows that
  \[J_N^\nw \cap J_D =\{0\}=J_N^\nw\cap J_D^\nw \qquad \text{if}\quad
  \omega(N/D)>1.\] 

  Suppose that $N=Mp^r=Dp^t$ with $r\geq t>0$ and $\gcd(p,M)=1$. In
  particular, $p\nmid \Char(K)$. Then $p\OO$ is divisible by
  $(1-(\zeta_N)^D)$, so $J_N^\nw\cap J_D\subseteq J_D[p]$.  It follows
  that $J_N^\nw\cap J_D$ is naturally a $\ff_p[T]/(\bar{\Phi}_N(T),
  T^D-1)$-module. By Lemma~\ref{lem:factorization-Phi-N-mod-p},
  $\bar{\Phi}_N(T)=\bar{\Phi}_M(T)^{\varphi(p^r)}$. On the other hand,
  $T^D-1= (T^M-1)^{p^{r-t}}$ in $\ff_p[T]$. Since $\gcd(p,M)=1$,
  $T^M-1$ is separable over $\ff_p$. Because
  $\varphi(p^r)=p^{r-1}(p-1)\geq p^{r-t}$,
  $\gcd(\bar{\Phi}_M(T)^{\varphi(p^r)}, (T^M-1)^{p^{r-t}})=
  \bar{\Phi}_M(T)^{p^{r-t}}$. We have
\[\zz[T]/(\Phi_N(T), P_D(T))\cong \ff_p[T]/(\bar{\Phi}_N(T),
T^D-1)\cong \ff_p[T]/(\bar{\Phi}_M(T)^{p^{r-t}}). \]
Therefore, if  $N=p^tD=p^rM$ with $r\geq t>0$ and $p\nmid M$, then
\begin{equation}
  \label{eq:intersection-new-part-JD}
  J_N^\nw \cap J_D=\{x\in J_D[p]\mid \Phi_M(\delta_N)^{p^{r-t}}x=0\}.
\end{equation}
By Theorem~\ref{thm:main} and
Lemma~\ref{lem:free-tate-module-p-torsion}, $J_D[p]$ is a free
$\ff_p[T]/(\bar{P}_D(T))$-module of rank $n-1$. It follows that
$J_N^\nw \cap J_D$ is a free
$\ff_p[T]/(\bar{\Phi}_M(T)^{p^{r-t}})$-module of rank $n-1$. In
particular, it has dimension $(n-1)\varphi(M)p^{r-t}$ over
$\ff_p$. Furthermore, 
\[J_N^\nw\cap J_D^\nw \subseteq J_N^\nw \cap J_D\cap J_D^\nw\subseteq
J_D[p]\cap J_D^\nw=J_D^\nw[p].\] Since $\bar{\Phi}_D(T)=
\bar{\Phi}_M(T)^{\varphi(p^{r-t})}$ divides $\bar{\Phi}_N(T)$ in
$\ff_p[T]$, one has \[J_D[p]= (\ker\Phi_D(\delta_n))[p]\subseteq (\ker
\Phi_N(\delta_N))[p]= J_N^\nw[p].\] Therefore, 
\begin{equation}
  \label{eq:JN-new-intersection-JD-nw}
  J_N^\nw\cap J_D^\nw =J_D^\nw[p]. 
\end{equation}

If $1<D_1<D_2$ are divisors of $N$ with $D_1\nmid
D_2$, then $J_{D_1}^\nw\cap J_{D_2}^\nw=\{0\}$ since $(\Phi_{D_1}(T),
\Phi_{D_2}(T))=\zz[T]$ by
Lemma~\ref{lem:cyclotomic-poly-gen-unit-ideal}.
\end{sect}

\begin{sect}\label{subsec:isomorphic-with-dual-for-JN-new}
  Recall that $G(T)=P_N(T)/\Phi_N(T)=\prod_{D\mid N,
    1<D<N}\Phi_D(T)\in \zz[T]$. Consider the map $G(\delta_N):J_N\to
  J_N$. By Subsection~\ref{subsec:def-old-and-new-part}, $\ker
  G(\delta_N)=J_N^{\mathrm{old}}$. By the proof of
  Corollary~\ref{cor:criterion-for-kernel-connected},
  $G(\delta_N)J_N=(\ker\Phi_N(\delta_N))^\circ
  =\ker\Phi_N(\delta_N)=J_N^\nw$. Therefore, $G(\delta_N)$ factors as
  $J_N\to J_N^\nw \xrightarrow{\epsilon_N} J_N$. By an abuse of
  notation, we will denote the map $J_N\to J_N^\nw$ thus obtained by
  $G(\delta_N)$ as well. There is an exact sequence
  \begin{equation}
    \label{eq:exact-seq-with-ker-JN-old}
    0\leftarrow J_N^\nw \xleftarrow{G(\delta_N)} J_N
    \xleftarrow{\jmath} J_N^{\mathrm{old}} \leftarrow 0. 
  \end{equation}
  On the other hand, taking the dual exact sequence (\cite[Exercise
  10.1, p131]{MR1987784}, or \cite[Proposition 2.42]{MR2062673} in the
  complex abelian variety case) of (\ref{eq:18}) and identifying $J_N$
  with $J_N^\vee$ via $\lambda_N$, we obtain another exact sequence 
\begin{equation}
  \label{eq:20}
  0 \leftarrow (J_N^\nw)^\vee \xleftarrow{\epsilon_N^\vee\circ \lambda_N} J_N
  \xleftarrow{\jmath} J_N^{\mathrm{old}}\leftarrow 0. 
\end{equation}
So there exists an isomorphism by comparing
(\ref{eq:exact-seq-with-ker-JN-old}) and (\ref{eq:20}):
\begin{equation}
  \label{eq:21}
  \kappa_N: J_N^\nw \xrightarrow{\simeq} (J_N^\nw)^\vee.
\end{equation}
Moreover, $G(\delta_N)=\kappa_N^{-1}\circ \epsilon_n^\vee \circ
\lambda_N$. 

The induced polarization on $J_N^\nw$ from $\lambda_N: J_N\to
J_N^\vee$ is defined to be the composition of maps
\[\lambda_N^\nw: J_N^\nw\xrightarrow{\epsilon_N} J_N
\xrightarrow{\lambda_N} J_N^\vee \xrightarrow{\epsilon_N^\vee} (J_N^\nw)^\vee. \]
It follows that $\kappa_N^{-1}\circ\lambda_N^\nw=
G(\delta_N)\mid_{J_N^\nw}$, the restriction of $G(\delta_N)$ on
$J_N^\nw$. Since $\kappa_N$ is an isomorphism, we have 
\begin{equation}
  \label{eq:22}
  \ker \lambda_N^\nw = \ker (G(\delta_N)\mid_{J_N^\nw}).
\end{equation}
\end{sect}

\begin{sect}
  Again let $\OO:=\zz[\zeta_N]\cong \zz[T]/(\Phi_N(T))$ be the ring of
  integers in the cyclotomic field $\qq(\zeta_N)$. There is an
  embedding $\OO\hookrightarrow \End(J_N^\nw)$ by $\zeta_N\mapsto
  \delta_N\mid_{J_N^\nw}$, and we will identify $\OO$ with its
  image. Then
\[G(\delta_N)\mid_{J_N^\nw}=G(\zeta_N)=\prod_{\substack{0< i<N\\\gcd(i,N)>1}}(\zeta_N-\zeta_N^i)=(\zeta_N)^{N-1-\varphi(N)}
\prod_{\substack{0<i< N\\\gcd(i,N)>1}}(1-\zeta_N^{i-1}). \]
If $\omega(N)=1$, i.e., $q:=N=p^r$ is a prime power, then
$G(T)=P_{q/p}(T)$, and $\zeta_q^{i-1}$ is a primitive $q$-th root of
unity  for all $i$ with $\gcd(i, q)>1$. Therefore, 
\begin{equation}
  \label{eq:23}
  G(\zeta_q)=u_q\cdot(1-\zeta_q)^{p^{r-1}-1} \quad \text{for some} \quad u_q\in \OO^\times.   
\end{equation}
Now suppose that $N=p_1^{r_1}\cdots p_t^{r_t}$ with $t>1$. We would
like to find the set
\[ \{ i \in \zmod{N}\mid i\neq 0, \gcd(i, N)>1, 1-\zeta_N^{i-1}\not\in
\OO^\times\}.\] First note that $1-\zeta_N^{i-1}\not\in \OO^\times$ if
and only if $\zeta_N^{(i-1)p_s^{r_s}}=1$ for some $1\leq s\leq
t$. Since it is also required that $\gcd(i, N)>1$, necessarily
$p_s\mid i$. We are reduced to solve the following equation:
\begin{equation}
  \label{eq:24}
\left\{
  \begin{aligned}
    i &\equiv 1 \mod{N/p_s^{r_s}},\\
    i &\equiv 0 \mod{p_s}.
  \end{aligned}
\right.
\end{equation}
By the Chinese Remainder Theorem, (\ref{eq:24}) has a unique solution
in $\zmod{(N/p^{r_s-1})}$. Lifting it to $\zmod{N}$, we obtain
  $p^{r_s-1}$ solutions of (\ref{eq:24}) in $\zmod{N}$. Let
  $q_s:=p_s^{r_s}$, and $\zeta_{q_s}:=(\zeta_N)^{N/q_s}$. Then
  $\zeta_N^{i-1}$ is a primitive $q_s$-th root of unity for all
  solutions of (\ref{eq:24}). It follows that if $N=\prod_{s=1}^t q_s
  =\prod_{s=1}^t p_s^{r_s}$ with $t>1$, then 
  \begin{equation}
    \label{eq:25}
    G(\zeta_N)=u_N \prod_{s=1}^t (1-\zeta_{q_s})^{p_s^{(r_s-1)}} \quad
    \text{for some} \quad u_N\in \OO^\times .
  \end{equation}
In summary, let
\begin{equation}
  \label{eq:27}
  c_N:=
  \begin{cases}
    (1-\zeta_q)^{p^{r-1}-1} &\qquad \text{if } N=q=p^r,\\
  \prod_{s=1}^t (1-\zeta_{q_s})^{p_s^{(r_s-1)}} &\qquad \text{if } N=\prod_{s=1}^t q_s
  =\prod_{s=1}^t p_s^{r_s} \text{ and } t>1.
  \end{cases}
\end{equation}
Then
\begin{equation}
  \label{eq:26}
  \ker \lambda_N^\nw
  =J_N^\nw[c_N]:=\ker(J_N^\nw\xrightarrow{c_N}J_N^\nw). 
\end{equation}
We leave it as an exercise to show that 
\begin{equation}
  \label{eq:2}
  \Nm_{\OO/\zz}(c_N)=
  \begin{cases}
   p^{\,(p^{r-1}-1)} &\qquad \text{if } N=q=p^r,\\
  \left(\prod_{p\mid N} p^{\frac{1}{p-1}}\right)^{\varphi(N)} &\qquad \text{if } \omega(N)>1.
  \end{cases}
\end{equation}
Since $T_\ell J_N^\nw$ is a free $\OO\otimes \zz_\ell$-module of rank
$n-1$ for all $\ell\neq \Char(K)$, 
\begin{equation}
  \label{eq:3}
\deg \lambda_N^\nw = \abs{\ker[\lambda_N^\nw]}=   \Nm_{\OO/\zz}(c_N)^{n-1}.  
\end{equation}
\end{sect}
\begin{thm}\label{thm:principal-polarization-Jq-nw-power-odd-prime}
  Suppose that $q=p^r$ is a prime power with $p\neq 2$.  There exists a
  principal polarization $\widetilde{\lambda}_q^\nw: J_q^\nw\to
  (J_q^\nw)^\vee$.
\end{thm}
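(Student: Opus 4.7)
The plan is to ``divide out'' the factor $c_q = (1-\zeta_q)^{p^{r-1}-1}$ from $\lambda_q^\nw$ after replacing it by a totally real, totally positive element generating the same $\OO$-ideal (here $\OO := \zz[\zeta_q]$). Since $p$ is odd, $p^{r-1}-1$ is even, so one may set
\[
\gamma := (1-\zeta_q)^{(p^{r-1}-1)/2} \in \OO \subset \End(J_q^\nw), \qquad \beta := \gamma\bar{\gamma},
\]
where $\bar{\cdot}$ denotes complex conjugation on $\OO$. The case $r = 1$ is trivial since then \eqref{eq:27} gives $c_q = 1$ and $\lambda_q^\nw$ is already principal; in general $\beta$ is totally real, totally positive, and generates the ideal $\mathfrak{p}^{p^{r-1}-1}$ in $\OO$, where $\mathfrak{p} = (1-\zeta_q)$ is the unique prime above $p$, which is the same ideal as $c_q\OO$.

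First I would apply \eqref{eq:26} to deduce $J_q^\nw[\beta] = J_q^\nw[c_q] = \ker \lambda_q^\nw$ (since $\beta$ and $c_q$ differ by a unit), so that $\lambda_q^\nw$ factors uniquely through the isogeny $\beta \colon J_q^\nw \to J_q^\nw$ as $\lambda_q^\nw = \widetilde{\lambda}_q^\nw \circ \beta$ for some homomorphism $\widetilde{\lambda}_q^\nw \colon J_q^\nw \to (J_q^\nw)^\vee$. By Lemma~\ref{lem:free-tate-module-over-ring-of-integers} and \eqref{eq:3}, $\deg\beta = \Nm_{\OO/\zz}(\beta)^{n-1} = p^{(p^{r-1}-1)(n-1)} = \deg\lambda_q^\nw$, so $\widetilde{\lambda}_q^\nw$ is an isomorphism.

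Next, I would check that $\widetilde{\lambda}_q^\nw$ is symmetric. Since $\beta$ is fixed by the Rosati involution attached to $\lambda_q^\nw$ (cf.\ Subsection~\ref{subsec:canonical-principal-polarization}), one has $\beta^\vee \circ \lambda_q^\nw = \lambda_q^\nw \circ \beta$, whence $\lambda_q^\nw = \beta^\vee \circ \widetilde{\lambda}_q^\nw$. On the other hand, dualizing $\lambda_q^\nw = \widetilde{\lambda}_q^\nw \circ \beta$ and using $(\lambda_q^\nw)^\vee = \lambda_q^\nw$ yields $\lambda_q^\nw = \beta^\vee \circ \widetilde{\lambda}_q^{\nw,\vee}$. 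Cancelling the isogeny $\beta^\vee$ (valid because $J_q^\nw$ is connected, so any $\Hom$-map whose composition with an isogeny vanishes must itself vanish) gives $\widetilde{\lambda}_q^\nw = \widetilde{\lambda}_q^{\nw,\vee}$. By the standard identification of the N\'eron--Severi group $\Pic(J_q^\nw)/\Pic^0(J_q^\nw)$ with symmetric homomorphisms $J_q^\nw \to (J_q^\nw)^\vee$, there exists a line bundle $\MM$ on $J_q^\nw$ with $\widetilde{\lambda}_q^\nw = \phi_{\MM}$.

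Finally, to show $\MM$ is ample, write $\lambda_q^\nw = \phi_\LL$ for an ample $\LL$. Using $\gamma' = \bar{\gamma}$ under the Rosati involution (so $\gamma^\vee \circ \lambda_q^\nw = \lambda_q^\nw \circ \bar{\gamma}$) and $\gamma \bar{\gamma} = \beta$, a direct calculation in $\End^0(J_q^\nw)$ gives
\[
\phi_{\gamma^*\MM} = \gamma^\vee \circ \widetilde{\lambda}_q^\nw \circ \gamma = \gamma^\vee \circ \lambda_q^\nw \circ \beta^{-1} \circ \gamma = \lambda_q^\nw \circ \bar{\gamma}\gamma\beta^{-1} = \lambda_q^\nw = \phi_\LL.
\]
Hence $\gamma^*\MM \otimes \LL^{-1} \in \Pic^0(J_q^\nw)$; since $\phi_\LL$ is surjective, this difference has the form $t_x^*\LL \otimes \LL^{-1}$ for some $x \in J_q^\nw(K)$, so $\gamma^*\MM \cong t_x^*\LL$ is ample. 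As $\gamma$ is a finite surjective morphism, the standard descent principle for ampleness then implies $\MM$ is ample, so $\widetilde{\lambda}_q^\nw = \phi_\MM$ is a principal polarization. The main obstacle lies in the symmetry cancellation and the ampleness descent in the last two steps; both rely critically on the hypothesis $p \neq 2$, which is precisely what allows the extraction of the ``square root'' $\gamma$ of the ideal $c_q\OO$ inside $\OO$.
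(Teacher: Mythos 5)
Your proof is correct, and the core of it coincides with the paper's: both exploit the oddness of $p$ to extract a square root $\tau_q=\gamma=(1-\zeta_q)^{(p^{r-1}-1)/2}$, factor $\lambda_q^\nw=\widetilde\lambda_q^\nw\circ(\gamma\bar\gamma)$, and conclude $\widetilde\lambda_q^\nw$ is an isomorphism by a degree count. Where you diverge is the final step, showing $\widetilde\lambda_q^\nw$ is a \emph{polarization}. The paper passes to $\End^0(J_q^\nw)$ via $\phi\mapsto(\lambda_q^\nw)^{-1}\circ\phi$, identifies $\widetilde\lambda_q^\nw$ with $1/(\tau_q\bar\tau_q)$, observes this element is Rosati-fixed and totally positive, and invokes Mumford's criterion (Application~21.III in \cite{MR2514037}) as a black box to conclude it corresponds to an ample line bundle. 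You instead verify symmetry of $\widetilde\lambda_q^\nw$ directly by dualizing and cancelling the isogeny $\beta^\vee$, deduce $\widetilde\lambda_q^\nw=\phi_\MM$ from the N\'eron--Severi identification, and then compute $\phi_{\gamma^*\MM}=\gamma^\vee\circ\widetilde\lambda_q^\nw\circ\gamma=\lambda_q^\nw=\phi_\LL$ to show $\gamma^*\MM$ is a translate of an ample bundle, invoking descent of ampleness along the finite surjective map $\gamma$ to conclude $\MM$ is ample. Both routes are valid. The paper's is shorter but leans on Mumford's positivity criterion; yours is more self-contained, replacing the total-positivity argument by an explicit pullback computation and the standard fact that ampleness descends along finite surjections (which for an abelian variety is essentially equivalent to the positivity statement). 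Your closing remark is slightly off in emphasis: the symmetry cancellation and ampleness descent steps do not themselves use $p\neq2$; the hypothesis is used exactly once, to ensure $(p^{r-1}-1)/2\in\nn$ so that $\gamma$ exists in $\OO$.
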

\begin{proof}
  If $q=p$, then $J_p^\nw = J_p$ is the Jacobian of $C_{f,p}$, which
  is automatically principally polarized. So assume $r\geq 2$.  By
  Subsection~\ref{subsec:canonical-principal-polarization}, the Rosati
  involution on $\qq(\zeta_q)\subseteq \End^0(J_N^\nw)$ induced by the
  polarization $\lambda_N^\nw: J_N^\nw \to (J_N^\nw)^\vee$ is the
  complex conjugation map $x\mapsto \bar{x}$.  Since $p$ is odd,
  $(p^{r-1}-1)/2\in \nn$. We have $c_q=\tau_q^2$ with
  $\tau_q:=(1-\zeta_q)^{(p^{r-1}-1)/2}\in \zz[\zeta_q]$. Because
  $\bar{\tau}_q$ differs from $\tau_q$ by a unit,
\[\ker(J_q^\nw\xrightarrow{\lambda_q^\nw}
(J_q^\nw)^\vee)=J_q^\nw[c_q]=\ker(J_q^\nw\xrightarrow{\tau_q\bar{\tau}_q}
J_q^\nw).\] It follows that $\lambda_q^\nw:J_N^\nw \to (J_N^\nw)^\vee
$ factors as
\[ \lambda_q^\nw= \widetilde{\lambda}_q^\nw\circ
(\tau_q\bar{\tau}_q) \] for some isomorphism
$\widetilde{\lambda}_q^\nw:J_q^\nw\xrightarrow{\simeq}
(J_q^\nw)^\vee$.  Under the isomorphism
\[\Hom^0(J_q^\nw, (J_q^\nw)^\vee) \xrightarrow{\simeq} \End^0(J_q^\nw), \qquad
\phi\mapsto (\lambda_q^\nw)^{-1}\circ \phi, \]
$\widetilde{\lambda}_q^\nw$ is identified with
$1/(\tau_q\bar{\tau}_q)$. Because $1/(\tau_q\bar{\tau}_q)$ is fixed by
the Rosati involution and totally positive,
$\widetilde{\lambda}_q^\nw$ is induced from an ample line bundle $\LZ$
on $J_q^\nw$ by \cite[Application 21.III]{MR2514037}. In other words,
$\widetilde{\lambda}_q^\nw$ is a polarization, which is necessarily
principal since $\widetilde{\lambda}_q^\nw$ is an isomorphism.
\end{proof}

\begin{proof}[Proof of Theorem~\ref{thm:results-on-JN-new}] The first
  part of the theorem is proved by combining Theorem~\ref{thm:main}
  and Theorem~\ref{thm:equivalence-tate-free-kernel-connected}, noting
  that $\zz[T]/(\Phi_D(T))\cong \zz[\zeta_D]$ is integrally closed for
  any $D\in \nn$. The fact that $J_N^\nw$ is isomorphic to
  $(J_N^\nw)^\vee$ is proven in
  Subsection~\ref{subsec:isomorphic-with-dual-for-JN-new}.  The existence of a principal polarization
  $\widetilde{\lambda}_q^\nw: J_q^\nw\to (J_q^\nw)^\vee$ when $q=p^r$
  and $p\neq 2$ is shown in
  Theorem~\ref{thm:principal-polarization-Jq-nw-power-odd-prime}.
  \end{proof}


\section{Arithmetic Results}
In this section, we prove the arithmetic results that are referred to
previously.

\begin{sect}\label{subsec:arithmetic-resultant}
  We retain the notations of
  Subsection~\ref{subsec:resultant-and-generator-of-intersection}. In
  particular, $F(T), G(T)\in \zz[T]$ are monic polynomials with
  $\gcd(F(T), G(T))=1$. Since $F(T)$ and $G(T)$ are coprime,
  $\zz[T]/(F(T), G(T))$ is a finite ring, and its cardinality is given
  by the absolute value of $\Res(F(T), G(T))$ by (\ref{eq:19}). It follows that
  \[ \Res(F(T), G(T))\in (F(T), G(T))\cap \zz,\] where the
  intersection is taken within $\zz[T]$.  Suppose that the following
  is the invariant factor decomposition \cite[Theorem
  VII.4.2]{MR1080964} of $\zz[T]/(F(T), G(T))$ as a $\zz$-module:
  \begin{equation}
    \label{eq:elementary-divisor-decomposition}
    \zz[T]/(F(T),G(T))\simeq \zmod{d_1}\oplus \cdots \oplus\zmod{d_r},
  \end{equation}
  where $d_i\mid d_{i+1}$ for all $1\leq i\leq r-1$ and $d_i>0$ for
  all $i$. We claim that
  \[ (F(T), G(T))\cap \zz=d_r\zz.\] Indeed, $(F(T), G(T))\,\cap\, \zz$ may
  be characterized as the annihilator of $\zz[T]/(F(T), G(T))$ as a
  $\zz$-module.  This coincides with $d_r\zz$ by
  (\ref{eq:elementary-divisor-decomposition}). Clearly,
  \[ \abs{\Res(F(T), G(T))}=\abs{\zz[T]/(F(T),G(T))}= d_1\cdots d_r.\]
  It follows that
  \begin{equation}
    \label{eq:result-generator-prime-divisors}
  d_r\mid \Res(F(T), G(T)),\quad \text{and} \quad p\mid
  d_r\Leftrightarrow p \mid \Res(F(T), G(T)).    
  \end{equation}
\end{sect}

Recall that $\Phi_M(T)\in\zz[T]$ denotes the $M$-th cyclotomic
polynomial, and $\bar{\Phi}_M(T)\in \ff_p[T]$ is its reduction modulo
$p$. The number of distinct prime factors of $M$ is denoted by
$\omega(M)$.
\begin{lem}\label{lem:factorization-Phi-N-mod-p}
  Suppose that $\gcd(q,D)=1$ and $q=p^r$, then $ \bar{\Phi}_{qD}(T)=
  (\bar{\Phi}_D(T))^{\varphi(q)}$ in $\ff_p[T]$. 
\end{lem}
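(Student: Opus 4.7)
The plan is to first establish the integral identity
\[
  \Phi_D(T^{p^r}) \;=\; \prod_{s=0}^{r} \Phi_{p^s D}(T) \qquad \text{in } \zz[T],
\]
valid for all $r\geq 0$ and $D$ with $\gcd(p,D)=1$, and then reduce modulo~$p$ using the Frobenius.

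\textbf{Step 1 (a root-counting lemma).} Over $\overline{\qq}$, the roots of $\Phi_D(T^{p^r})$ are precisely those $\eta$ for which $\eta^{p^r}$ is a primitive $D$-th root of unity. Writing the order of such an $\eta$ as $p^s e$ with $\gcd(p,e)=1$, a short calculation shows that the order of $\eta^{p^r}$ equals $p^{\max(s-r,0)} e$; demanding this equal $D$ forces $e=D$ and $0\leq s\leq r$. Hence the roots of $\Phi_D(T^{p^r})$ are exactly the primitive $p^s D$-th roots of unity for $0\leq s\leq r$, each appearing once (a degree check $\sum_{s=0}^r \varphi(p^s D) = p^r\varphi(D) = \deg \Phi_D(T^{p^r})$ rules out multiplicities), giving the claimed product factorization.

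\textbf{Step 2 (telescoping).} For $r\geq 1$, dividing the identity of Step~1 for $r$ by the identity for $r-1$ yields
\[
  \Phi_{p^r D}(T) \;=\; \frac{\Phi_D(T^{p^r})}{\Phi_D(T^{p^{r-1}})}.
\]

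\textbf{Step 3 (Frobenius in characteristic $p$).} For any $f(T)\in\ff_p[T]$, since $a^p=a$ for $a\in\ff_p$ one has $f(T)^p = f(T^p)$; iterating gives $f(T)^{p^i} = f(T^{p^i})$. Applied to $f=\bar{\Phi}_D$ this yields $\bar{\Phi}_D(T^{p^i}) = \bar{\Phi}_D(T)^{p^i}$ in $\ff_p[T]$.

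\textbf{Step 4 (conclusion).} Reducing Step~2 modulo $p$ and substituting Step~3,
\[
  \bar{\Phi}_{p^r D}(T) \;=\; \bar{\Phi}_D(T)^{p^r - p^{r-1}} \;=\; \bar{\Phi}_D(T)^{\varphi(p^r)},
\]
which is the claim (with the degenerate case $r=0$ trivially giving $\bar{\Phi}_D(T)$ on both sides). The only nontrivial ingredient is the combinatorial root-counting of Step~1; once that is in hand, Steps~2--4 are purely formal, so I do not anticipate a real obstacle.
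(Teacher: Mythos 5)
Your proof is correct, but it takes a genuinely different route from the paper's. The paper proves the identity in a single computation using the M\"obius formula
\[
  \Phi_N(T)=\prod_{m\mid N}(T^m-1)^{\mu(N/m)},
\]
splitting the divisors of $qD$ as $m_1m_2$ with $m_1\mid D$, $m_2\mid q$, applying Frobenius to write $T^{m_1m_2}-1=(T^{m_1}-1)^{m_2}$ mod~$p$, and then collapsing the $m_2$-product via the classical identity $\sum_{m_2\mid q} m_2\,\mu(q/m_2)=\varphi(q)$. You instead first establish the \emph{integral} factorization $\Phi_D(T^{p^r})=\prod_{s=0}^r\Phi_{p^sD}(T)$ by matching roots (the order computation $\ord(\eta^{p^r})=p^{\max(s-r,0)}e$ plus the degree check is clean and complete), telescope this to the standard relation $\Phi_{p^rD}(T)=\Phi_D(T^{p^r})/\Phi_D(T^{p^{r-1}})$, and only then reduce mod~$p$ and invoke Frobenius. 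Your Step~1 is a reusable statement over $\zz$ in its own right, whereas the paper's argument is purely a mod-$p$ manipulation; on the other hand the paper's proof is shorter and needs no root-counting, just the M\"obius product formula and the $\varphi$-identity. Both are valid and roughly of the same elementary depth; I have no corrections to make.
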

\begin{proof}
  Let $\mu: \nn\to \{0,\pm 1\}$ be the M\"obius $\mu$-function
  (\cite[Section 2.2]{MR1070716}). More explicitly, $\mu(1)=1$, and
  for each $m>1$, $\mu(m)=(-1)^{\omega(m)}$ if $m$ is square free, and
  $\mu(m)=0$ otherwise. We have
  \[\Phi_N(T)= \prod_{D\mid N, D>0}(T^D-1)^{\mu(N/D)}. \]


Therefore, 
  \[
  \begin{split}
  \bar{\Phi}_{qD}(T)=&\prod_{m\mid qD} (T^m-1)^{\mu(qD/m)}=\prod_{m_1\mid
    D}\prod_{m_2\mid q} (T^{m_1m_2}-1)^{\mu(qD/(m_1m_2))}\\
         &=\prod_{m_1\mid D} \prod_{m_2\mid q}
         (T^{m_1}-1)^{m_2\mu(D/m_1)\mu(q/m_2)}\\
         &=\left(\prod_{m_1\mid D}(T^{m_1}-1)^{\mu(D/m_1)}\right)^{\varphi(q)}
  \end{split}
\]
since $\sum_{m_2\mid q} m_2\mu(q/m_2)=\varphi(q)$.
\end{proof}

\begin{lem}\label{lem:the-ideal-gen-QND-is-Phi-N}
  For each positive integer $D\mid N$, let
  $Q_{N,D}(T)=(T^N-1)/(T^D-1)\in\zz[T]$ be the polynomial in
  {\rm{(\ref{eq:10})}}.  The ideal in $\zz[T]$ generated by $Q_{N,
    N/p}(T)$ for all primes $p\mid N$ is the principal ideal
  $(\Phi_N(T))$.
\end{lem}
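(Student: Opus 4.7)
The plan is to establish both inclusions. The easy one, $(Q_{N,N/p}(T):p\mid N)\subseteq(\Phi_N(T))$, follows immediately from the factorization of $Q_{N,D}(T)$ recorded in (\ref{eq:10}): if $p^{r_p}\|N$, then $D'=N$ is among the divisors $D'\mid N$ with $D'\nmid N/p$, so $\Phi_N(T)\mid Q_{N,N/p}(T)$ for every prime $p\mid N$.

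For the reverse inclusion, I would write $Q_{N,N/p}(T)=\Phi_N(T)R_p(T)$ with
\[
R_p(T):=\prod_{\substack{D'\mid N,\,D'\ne N\\ p^{r_p}\mid D'}}\Phi_{D'}(T)\in\zz[T],
\]
and reduce the claim to showing that the ideal $(R_p:p\mid N)$ equals $\zz[T]$. If $N=p^r$ is a prime power, the product defining $R_p$ is empty, so $R_p=1$ and there is nothing to prove; otherwise let $p_1,\dots,p_t$ ($t\ge 2$) be the distinct primes dividing $N$. Since every maximal ideal of $\zz[T]$ has the form $(\ell,h)$ with $\ell\in\nn$ a rational prime and $h\in\ff_\ell[T]$ irreducible, it suffices to verify, for each prime $\ell$, that the reductions $\bar R_{p_1},\dots,\bar R_{p_t}\in\ff_\ell[T]$ share no common irreducible factor.

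For $\ell\nmid N$, the polynomial $T^N-1$ is separable modulo $\ell$, so the $\bar\Phi_{D'}$ for $D'\mid N$ are pairwise coprime; a common irreducible factor of the $\bar R_{p_s}$ would require a single $D'\ne N$ with $p_s^{r_s}\mid D'$ for every $s$, forcing $N\mid D'$, absurd. For $\ell=p_1$ I would write $N=p_1^{r_1}M$ with $\gcd(p_1,M)=1$ and invoke Lemma~\ref{lem:factorization-Phi-N-mod-p}: for $D'=p_1^a d$ with $d\mid M$, the irreducible factors of $\bar\Phi_{D'}$ in $\ff_{p_1}[T]$ are precisely those of $\bar\Phi_d$, and the $\bar\Phi_d$ for $d\mid M$ remain pairwise coprime. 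Hence any common irreducible factor $h$ of the $\bar R_{p_s}$ determines a unique $d_0\mid M$ with $h\mid\bar\Phi_{d_0}$. Tracing the definitions: $h\mid\bar R_{p_1}$ forces $d_0$ to be a \emph{proper} divisor of $M$ (the only $D'=p_1^{r_1}d_0$ appearing in $R_{p_1}$ have $d_0\ne M$), whereas $h\mid\bar R_{p_s}$ for $s\ge 2$ forces $p_s^{r_s}\mid d_0$; combined, these give $M\mid d_0$, contradicting properness.

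The main obstacle is the mixed-characteristic case $\ell\mid N$, since naively many cyclotomic factors could merge modulo $\ell$. Lemma~\ref{lem:factorization-Phi-N-mod-p} controls exactly this collapse, after which the constraints from $R_{p_1}$ and from $R_{p_s}$ ($s\ge 2$) impose opposing conditions on $d_0$ that cannot both hold.
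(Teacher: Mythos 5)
Your proposal is correct and follows essentially the same route as the paper: both reduce to showing that the cofactors $Q_{N,N/p}(T)/\Phi_N(T)$ (your $R_p$, the paper's $F_p$) generate the unit ideal, pass to a maximal ideal and reduce mod a prime $\ell$, and invoke Lemma~\ref{lem:factorization-Phi-N-mod-p} to control the collapse of cyclotomic factors when $\ell\mid N$. The paper phrases the final step slightly differently (for each proper divisor $M$ of the prime-to-$\ell$ part it exhibits a witness prime $\ell'\mid (M_{\max}/M)$ with $\bar\Phi_M\nmid\bar F_{\ell'}$, rather than deriving incompatible constraints on your $d_0$), but this is the same computation viewed from the other side.
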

\begin{proof}
  If $N=p^r$ is a prime power, then $Q_{N,N/p}(T)=\Phi_{p^r}(T)$.  Now
  suppose that $N=\prod_{i=1}^t p_i^{r_i}$ with $t>1$. For each $p\mid
  N$, let \[F_p(T):=\frac{T^N-1}{(T^{(N/p)}-1)\Phi_N(T)}\in
  \zz[T].\] Since $\zz[T]$ is a unique factorization domain, it is
  enough to show that the ideal $I$ generated by all $F_p(T)$ with
  $p\mid N$ is the unit ideal in $\zz[T]$. Suppose otherwise, then $I$
  is contained in a maximal ideal $\m$ of $\zz[T]$. By
  Subsection~\ref{subsec:arithmetic-resultant} and (\ref{eq:14}),
  $\m\cap \zz=p\zz$ for some $p\mid N$. Let $\bar{I}$ be the canonical
  image of $I$ in the quotient ring $\ff_p[T]=\zz[T]/(p)$. It is
  contained in the maximal ideal $\bar{\m}\subset \ff_p[T]$. Suppose
  that $N=qD$ with $q=p^r$ and $p\nmid D$.  Let $\ell$ be a prime
  divisor of $D$.
  \begin{align*}
   \bar{F}_p(T)= \frac{T^N-1}{(T^{(N/p)}-1)\bar{\Phi}_N(T)}&=\left(\frac{T^D-1}{\bar{\Phi}_D(T)}\right)^{\varphi(q)}=
    \left(\prod_{M\mid D, \; M\neq D}
      \bar{\Phi}_M(T)\right)^{\varphi(q)},
    \\
\bar{F}_\ell(T)=    \frac{T^N-1}{(T^{(N/\ell)}-1)\bar{\Phi}_N(T)}&=\frac{
      (T^D-1)^q}{(T^{(D/\ell)}-1)^q
      \bar{\Phi}_D(T)^{\varphi(q)}}=\bar{\Phi}_D(T)^{q/p}\prod_{d\mid
      D,\; \ell d\nmid D,}^{d\neq D} \bar{\Phi}_d(T)^q.
  \end{align*}
  Because $\gcd(p,D)=1$, the polynomial $T^D-1=\prod_{M\mid
    D}\bar{\Phi}_M(T)$ is separable in $\ff_p[T]$. In particular, for
  any two distinct divisors $M_1, M_1$ of $D$,
  $\gcd(\bar{\Phi}_{M_1}(T), \bar{\Phi}_{ M_2}(T))=1$.  Clearly,
  $\bar{F}_p(T)$ is not divisible by $\bar{\Phi}_D(T)$. For any $M\mid
  D$ and $M\neq D$, we take $\ell$ to be a prime divisor of $D/M$,
  then $\gcd(\bar{F}_\ell(T),\bar{\Phi}_M(T))=1$. It follows that
  $\gcd(\bar{F}_{p_1}(T), \cdots,\bar{F}_{p_t}(T))=1$ and hence
  $\bar{I}=\ff_p[T]$, which leads to a contradiction.
\end{proof}

\begin{lem}\label{lem:cyclotomic-poly-gen-unit-ideal}
  Suppose $D_1<D_2$ are two positive integers with $D_1\nmid
  D_2$. Then
  \[(\Phi_{D_1}(T), \Phi_{D_2}(T))=\zz[T].\]
\end{lem}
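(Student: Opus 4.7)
The plan is to argue by contradiction, reducing modulo a prime and then invoking Lemma~\ref{lem:factorization-Phi-N-mod-p}.

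Suppose $I := (\Phi_{D_1}(T),\Phi_{D_2}(T)) \neq \zz[T]$. Then $I$ is contained in some maximal ideal $\m \subset \zz[T]$. Since $\zz[T]/\m$ is a field, $\m \cap \zz$ is a nonzero prime ideal $p\zz$, and hence the reductions $\bar\Phi_{D_1}(T), \bar\Phi_{D_2}(T) \in \ff_p[T]$ share a common irreducible factor (namely, the image of $\m$ in $\ff_p[T] = \zz[T]/(p)$ is a maximal ideal containing both). So it suffices to prove that $\gcd(\bar\Phi_{D_1}(T), \bar\Phi_{D_2}(T)) = 1$ in $\ff_p[T]$ for every prime $p$.

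Next I would strip off the $p$-part of each $D_i$: write $D_i = p^{r_i} M_i$ with $\gcd(p, M_i) = 1$ and $r_i \geq 0$. By Lemma~\ref{lem:factorization-Phi-N-mod-p},
\[
\bar\Phi_{D_i}(T) = \bar\Phi_{M_i}(T)^{\varphi(p^{r_i})}
\]
in $\ff_p[T]$. So $\bar\Phi_{D_1}$ and $\bar\Phi_{D_2}$ share a common irreducible factor in $\ff_p[T]$ if and only if $\bar\Phi_{M_1}$ and $\bar\Phi_{M_2}$ do.

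Now the main point: since $\gcd(p, M_i) = 1$, the polynomial $T^{\lcm(M_1,M_2)} - 1 \in \ff_p[T]$ is separable, and it admits the factorization $\prod_{M \mid \lcm(M_1,M_2)} \bar\Phi_M(T)$. Hence for $M_1 \neq M_2$ the factors $\bar\Phi_{M_1}(T)$ and $\bar\Phi_{M_2}(T)$ are coprime in $\ff_p[T]$. So we are forced into $M_1 = M_2 =: M$, giving $D_1 = p^{r_1} M$ and $D_2 = p^{r_2} M$. If $r_1 \leq r_2$ then $D_1 \mid D_2$, contradicting the hypothesis; if $r_1 > r_2$ then $D_1 > D_2$, contradicting $D_1 < D_2$. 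Either way we reach a contradiction, and therefore $I = \zz[T]$.

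The only real content is the reduction-modulo-$p$ step together with Lemma~\ref{lem:factorization-Phi-N-mod-p}; the rest is bookkeeping. I do not anticipate any obstacle, since once the problem is pushed into $\ff_p[T]$ the separability of $T^M - 1$ for $\gcd(p,M)=1$ makes the distinct cyclotomic factors pairwise coprime.
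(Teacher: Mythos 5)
Your proof is correct, but it takes a genuinely different route from the paper's. The paper argues directly: setting $d=\gcd(D_1,D_2)$ (which satisfies $d<D_1$ by the hypotheses), one has $\Phi_{D_i}(T)\mid Q_{D_i,d}(T)$ for $i=1,2$, and the identity $a(T)(T^{D_1}-1)+b(T)(T^{D_2}-1)=T^d-1$ obtained from the Euclidean algorithm divides through by $T^d-1$ to give a Bézout relation $a(T)Q_{D_1,d}(T)+b(T)Q_{D_2,d}(T)=1$, so already $(Q_{D_1,d},Q_{D_2,d})=\zz[T]$. That argument is two lines, completely constructive, and does not touch cyclotomic factorizations mod $p$ at all. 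Your argument instead reduces modulo a prime $p$, strips off the $p$-part of each $D_i$ using Lemma~\ref{lem:factorization-Phi-N-mod-p}, and then exploits the separability of $T^{\lcm(M_1,M_2)}-1$ over $\ff_p$. Both are valid; yours makes the local picture explicit and reuses Lemma~\ref{lem:factorization-Phi-N-mod-p} (and is structurally parallel to how the paper proves Lemma~\ref{lem:the-ideal-gen-QND-is-Phi-N}), while the paper's is shorter and self-contained. One small point worth flagging in your write-up: the assertion that a maximal ideal of $\zz[T]$ meets $\zz$ in a nonzero prime is a standard fact, but here it drops out immediately from the resultant identity of Subsection~\ref{subsec:resultant-and-generator-of-intersection} — since $\Phi_{D_1}$ and $\Phi_{D_2}$ are coprime over $\qq$, their resultant is a nonzero integer lying in $(\Phi_{D_1},\Phi_{D_2})$, hence in $\m\cap\zz$ — and citing that would keep the argument within the paper's toolkit.
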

\begin{proof}
  Let $d=\gcd(D_1, D_2)<D_1$.  Clearly, $(\Phi_{D_1}(T),
  \Phi_{D_2}(T))\supseteq (Q_{D_1,d}(T), Q_{D_2, d}(T))$. So it is
  enough to prove that $(Q_{D_1,d}(T), Q_{D_2, d}(T))=\zz[T]$.  By the
  Euclidean algorithm, there exist $a(T), b(T)\in \zz[T]$ such that
\[ a(T)(T^{D_1}-1)+b(T)(T^{D_2}-1)=T^d-1.\]
The lemma follows by diving both sides by $T^d-1$. 
\end{proof}


\bibliographystyle{plain}
\bibliography{TeXBiB}
\end{document}